\theoremstyle:=definition,remark,plain\do{%
      \expandafter\g@addto@macro\csname th@\theoremstyle\endcsname{%
        \addtolength\thm@preskip\parskip
        }%
      }
  \newcommand\address[1]{}
  \newcommand\email[1]{}
  \newcommand\dedicatory[1]{}
  \theoremstyle{plain}
  \newtheorem{theorem}{Theorem}[section]
  \newtheorem{proposition}[theorem]{Proposition}
  \newtheorem{corollary}[theorem]{Corollary}
  \newtheorem{lemma}[theorem]{Lemma}
  \newtheorem{introthm}{Theorem}
  \newtheorem{introcor}[introthm]{Corollary}
  \theoremstyle{definition}
  \newtheorem*{claim*}{Claim}
  \newtheorem*{question*}{Question}
  \newtheorem*{answer*}{Answer}
  \newtheorem*{application*}{Application}
  \newcommand{\secref}[1]{Section~\ref{Sec:#1}}
  \newcommand{\thmref}[1]{Theorem~\ref{Thm:#1}}
  \newcommand{\corref}[1]{Corollary~\ref{Cor:#1}}
  \newcommand{\lemref}[1]{Lemma~\ref{Lem:#1}}
  \newcommand{\propref}[1]{Proposition~\ref{Prop:#1}}
  \newcommand{\figref}[1]{Figure~\ref{Fig:#1}}
  \renewcommand{\eqref}[1]{Equation~(\ref{Eq:#1})}
  \newcommand{\lab}{\ensuremath{\ell}\xspace}
  \DeclareMathOperator{\init}{i}
  \DeclareMathOperator{\supp}{supp}
  \DeclareMathOperator{\term}{t}
  \newcommand{\calG}{\mathcal{G}}
  \newcommand{\calT}{\mathcal{T}}
  \newcommand{\from}{\colon\,}
  \newcommand{\G}{\ensuremath{\calG_n}\xspace} 
  \newcommand{\T}{\ensuremath{\calT_n}\xspace} 
  \newcommand{\bT}{\ensuremath{\overline{\Theta}}\xspace} 
  \newcommand{\NN}{\ensuremath{\mathbb{N}}\xspace} 
  \newcommand{\ZZ}{\ensuremath{\mathbb{Z}}\xspace} 
  \newcommand{\GL}{\ensuremath{\operatorname{GL}_n(\ZZ)}\xspace} 
  \newcommand{\F}{\ensuremath{\operatorname{F}_n}\xspace} 
  \newcommand{\FF}{\ensuremath{\operatorname{F}}\xspace} 
  \newcommand{\mcg}{\ensuremath{\operatorname{MCG}(\Sigma)}\xspace}  
  \newcommand{\aut}{\ensuremath{\operatorname{Aut}(\F)}\xspace} 
  \newcommand{\out}{\ensuremath{\operatorname{Out}(\F)}\xspace} 
  \newcommand{\norm}[1]{\ensuremath{\left\| {#1} \right\|}\xspace} 
  \newcommand{\gen}[1]{\ensuremath{\left\langle {#1} \right\rangle}\xspace} 
  \renewcommand{\split}{\ensuremath{\stackrel{s}{\to}}\xspace}
  \newcommand{\splitt}[1]{\ensuremath{\stackrel{#1}{\longrightarrow}}\xspace}
  \newcommand{\param}{{\mathchoice{\mkern1mu\mbox{\raise2.2pt\hbox{$
  \centerdot$}}
  \mkern1mu}{\mkern1mu\mbox{\raise2.2pt\hbox{$\centerdot$}}\mkern1mu}{
  \mkern1.5mu\centerdot\mkern1.5mu}{\mkern1.5mu\centerdot\mkern1.5mu}}}
\begin{document}


  \title    {Uniform growth rate}
  \author   {Kasra Rafi}
  \address  {Department of Mathematics\\
             University of Toronto\\
             Toronto, CA} 
  \email    {rafi@math.toronto.edu}
  \author   {Jing Tao}
  \address  {Department of Mathematics\\
             University of Utah\\
             Norman, OK 73019-0315, USA}
  \email    {jing@math.ou.edu}
  \date     {}
  \author   {Kasra Rafi\footnote{\small Partially supported by NCERC
  Research Grant, RGPIN 435885.} $\,\,$ and Jing Tao\footnote{\small
  Partially supported by NSF Research Grant, DMS-1311834}}

  \maketitle
  \thispagestyle{empty}

  \begin{abstract} 
    
    \noindent 
    
    In an evolutionary system in which the rules of mutation are local in
    nature, the number of possible outcomes after $m$ mutations is an
    exponential function of $m$ but with a rate that depends only on the
    set of rules and not the size of the original object. We apply
    this principle to find a uniform upper bound for the growth rate of
    certain groups including the mapping class group. We also find a
    uniform upper bound for the growth rate of the number of homotopy
    classes of triangulations of an oriented surface that can be obtained
    from a given triangulation using $m$ diagonal flips. 
    
  \end{abstract}
  

\section{Introduction}

  Let $G$ be a group and $S$ be a generating set for $G$. We denote the
  word length in $G$ associated to $S$ with $\norm{\param}_S$. Recall that
  the growth rate of $G$ (relative to $S$) is defined to be 
  \[ 
    h_G = \lim_{R \to \infty} \frac{\log \, \# B_R(G)}{R},
    \qquad\text{where}\qquad
    B_R(G) = \Big\{g \in G  \ \Big| \ \norm{g}_S\leq R \Big\}.
  \] 

  In his $60^{\text th}$ birthday conference, Bill Thurston mentioned that
  the mapping class group has a growth rate that is independent of its
  genus. Namely, consider the following set of curves on a surface
  $\Sigma=\Sigma_{g,p}$ of genus $g$ with $p$ punctures: 
   
  \begin{figure}[htp]
  \begin{center}
    \includegraphics[width=.8\textwidth]{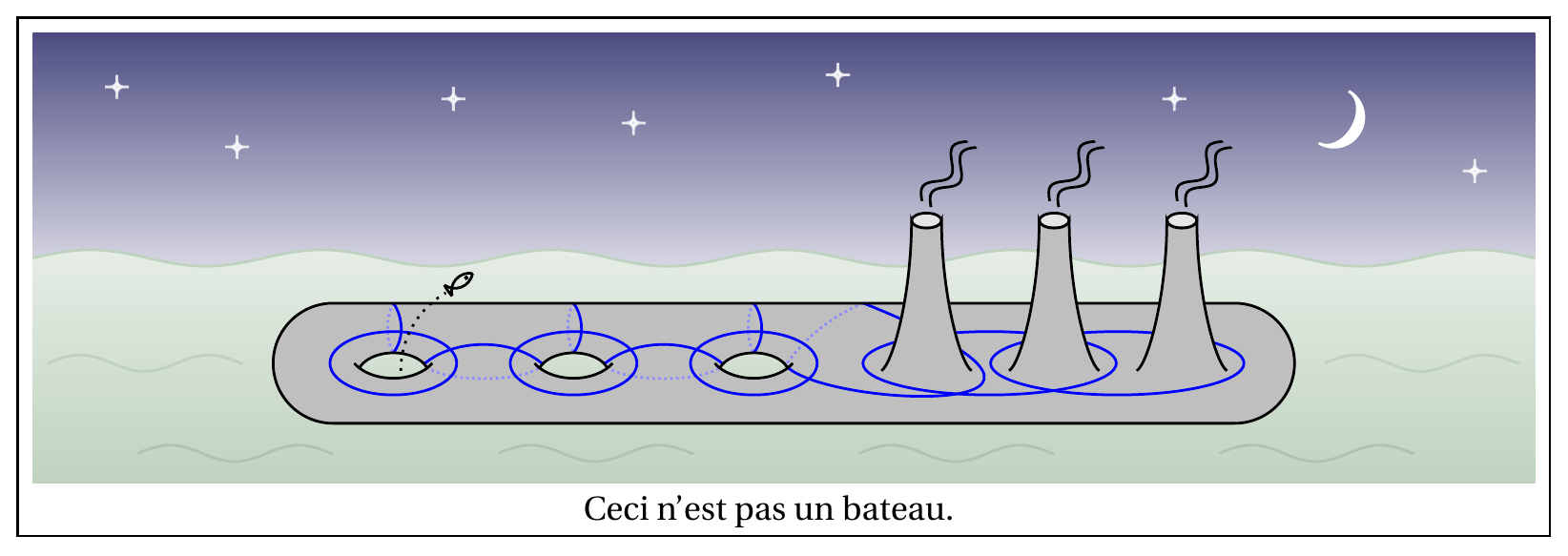}
  \end{center}
  \end{figure}
   
  Let $S$ be the set of Dehn (or half) twists around these curves. This set
  $S$ generates \mcg, the mapping class group of $\Sigma$ \cite{Lic64,
  FM12, Art47, Bir74}. (Note that $S$ is a combination of the Lickorish
  generators of the mapping class group of a closed surface and the
  standard generators of a braid group). We will refer to $S$ as the set of
  \emph{extended Lickorish generators} for \mcg. Then the growth rate of
  \mcg equipped with the word metric associated to $S$ has an upper bound
  that is independent of the topology of $\Sigma$. This, Thurston asserted,
  is true since most pairs of elements in $S$ commute.

  Note that, in fact, the number of elements in $S$ that do not commute
  with a given element in $S$ is uniformly bounded. We show that this is
  enough to obtain the uniform growth rate in general. 
   
  \begin{introthm} \label{Thm:IntroCommute} 
     
    Given any $c_0$, let $S$ be a generating set for a group $G$ such that,
    for every $s \in S$, the number elements of $S$ that do not commute
    with $s$ is bounded by $c_0$. Then $h_G \leq \log (2c_0+2) +1$. 

  \end{introthm}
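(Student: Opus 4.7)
The plan is to encode each element $g\in B_R$ by a canonical geodesic word and then count these canonical words using the bounded non-commutation degree hypothesis.

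First, I would fix a total order on $S\cup S^{-1}$ and, for each $g\in G$, let $w_g = s_1 s_2\cdots s_m$ be its shortlex-minimal geodesic representative: minimal in length, and lex-minimal among geodesics of minimal length. The crucial structural property to establish is that in such a $w_g$, whenever $s_i$ and $s_{i+1}$ commute in $G$ one must have $s_i\le s_{i+1}$, since otherwise a single adjacent commutation swap would produce a lex-smaller geodesic, contradicting minimality. A slightly strengthened argument shows more: if $s_{i+1}$ commutes with all of $s_j, s_{j+1},\dots, s_i$, then commuting $s_{i+1}$ leftward to position $j$ is an allowed move, so one must have $s_{i+1}\ge s_j$.

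Next, I would count canonical words letter by letter. For each position $i\ge 2$, exactly one of two cases occurs. In case (a), $s_i$ does not commute with $s_{i-1}$, and so $s_i$ is one of the at most $2c_0$ non-commuting partners of $s_{i-1}$ in $S\cup S^{-1}$. In case (b), $s_i$ commutes with $s_{i-1}$; the strengthened minimality forces a well-defined earlier "blocker" $p(i)$, namely the largest $j<i$ with $[s_j,s_i]\ne 1$, and the label $s_i$ is pinned to the non-commuting neighbourhood of $s_{p(i)}$, again giving at most $2c_0+2$ options once $s_{p(i)}$ is known.

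The main obstacle is the combinatorial bookkeeping in case (b), where, on top of the $\le 2c_0+2$ choices for the label, one has to record which earlier position is the blocker. This amounts to counting the admissible "blocker forests" on $\{1,\dots,R\}$ — forests in which each node's parent is an earlier position whose label fails to commute with it. I expect that a careful, Stirling-type count of these forests contributes a multiplicative factor of at most $e^R$ on top of the $(2c_0+2)^R$ from the label choices, yielding $\#B_R \le C\cdot\bigl((2c_0+2)\,e\bigr)^R$ for some constant $C=C(|S|)$. Taking logs and passing to the limit then gives $h_G \le \log(2c_0+2)+1$, as required; the precise form of the bound, with the extra $+1=\log e$, strongly suggests that this $e^R$ overcount is indeed the source of the slack in the inequality.
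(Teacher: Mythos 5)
Your proposal is in the right family as the paper's argument --- both count canonical representatives of group elements by a letter-by-letter encoding, and both rely on the bounded non-commutation degree to cap the number of choices --- but there is a genuine gap at the precise point you yourself flag as ``the main obstacle.''

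The issue is the blocker bookkeeping. You use a \emph{fixed} total order on $S\cup\overline S$ and the shortlex representative, so that when $s_i$ commutes with $s_{i-1}$ the blocker $p(i)$ can lie arbitrarily far back. You then need to control the number of admissible parent functions $p:\{1,\dots,R\}\to\{0,\dots,R-1\}$ with $p(i)<i$, and you assert this is at most $e^R$. But the number of such ``increasing forests'' is $\prod_{i=1}^R i = R!$, whose logarithm grows like $R\log R$, not linearly in $R$. So the claimed $e^R$ bound is false for the class of objects you describe, and no argument is offered for why the additional constraints (that $s_i$ lies in the non-commuting neighbourhood of $s_{p(i)}$ and commutes with the intervening letters) cut the count down to $e^R$. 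The numerological observation that the target bound has a $+1=\log e$ does not substitute for such an argument; in the paper that $+1$ arises from Stirling's estimate of a binomial coefficient, not from a forest count. There is also a secondary, smaller gap: your case (b) presumes a blocker exists, but the positions $i$ where $s_i$ commutes with \emph{all} of $s_1,\dots,s_{i-1}$ have $p(i)=0$ and need separate handling.

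What the paper does differently, and what your write-up is missing, is a \emph{dynamic} relabelling of the alphabet. The paper first reduces to a right-angled Artin group $A(\Theta)$ that surjects onto $G$ (so $h_G\le h_{A(\Theta)}$), and then builds the canonical representative greedily using a labelling $L_i$ of the generators that is \emph{updated} at each step: when $u_i$ is placed, all generators that fail to commute with $u_i$ are relabelled to values above the current maximum. With that update rule, the sequence of labels $|\ell_1|\le|\ell_2|\le\cdots\le|\ell_R|\le n+c_0R$ is automatically nondecreasing (this is precisely the content of Lemma 2.3), and the blocker information you are trying to record separately becomes redundant. The monotone label sequence is then counted by a binomial coefficient $\binom{n+(c_0+1)R}{R}$, whose logarithm is controlled by the binary entropy $H\left(\frac{1}{c_0+1}\right)$; together with the factor $2^R$ from signs this yields exactly $\log(2c_0+2)+1$. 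So your instinct about where the slack comes from is correct, but to make the count go through you need to replace the fixed shortlex order by the paper's evolving labelling (or find a genuinely different way to turn the blocker data into an object whose count you can actually bound by $e^R$).
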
  
   
  Since each curve in the extended Lickorish generators intersects at most
  $3$ other curves, we obtain:
   
  \begin{introcor} \label{Cor:MCG}

    The growth rate of \mcg relative to the extended Lickorish generators
    is bounded by $\log8+1$. 

  \end{introcor}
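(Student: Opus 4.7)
The plan is to deduce the corollary as a direct application of \thmref{IntroCommute}, with the commutation hypothesis translated from algebraic to geometric/topological terms on the surface $\Sigma$.

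First I would recall the standard fact (due to essentially Dehn; see \cite{FM12}) that two Dehn twists $T_a$ and $T_b$ about simple closed curves commute in $\mcg$ if and only if the geometric intersection number $i(a,b) = 0$, i.e.\ $a$ and $b$ can be realized disjointly. A parallel statement holds for half-twists around arcs between punctures and for the mixed case (half-twist versus Dehn twist), provided the supports are disjoint. Hence, for $s \in S$ the set of elements of $S$ that fail to commute with $s$ is in bijection with the set of curves/arcs in the Lickorish--Artin--Birman picture whose support has nonempty intersection with the support of $s$ (and which are not $s$ itself).

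Next I would inspect the configuration of curves and arcs depicted in the figure and verify that each of them meets at most three others. A quick look at the standard picture shows that the chain of curves running across the surface contributes at most two neighbors to any given curve, and the transverse curves encircling individual handles or punctures add at most one more, for a total intersection bound of $3$. This step is entirely combinatorial and is essentially the observation stated immediately before the corollary in the excerpt; I would just enumerate the curve types (the chain curves, the handle-encircling curves, and the braid-type arcs around punctures) and count neighbors case by case.

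Once these two points are established, I would apply \thmref{IntroCommute} with $c_0 = 3$. This yields
\[
  h_{\mcg} \;\leq\; \log(2 \cdot 3 + 2) + 1 \;=\; \log 8 + 1,
\]
which is exactly the bound claimed in \corref{MCG}.

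The only genuine piece of content is the intersection count in the middle step; the rest is a direct invocation of the theorem. There is no real obstacle, since both the algebraic--geometric commutation dictionary and the intersection bound $3$ can be read off the picture. The bound is moreover independent of $g$ and $p$, as it depends only on the local combinatorics of the generating configuration and not on its size, in keeping with the theme of the paper.
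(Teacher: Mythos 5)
Your proof is correct and matches the paper's approach: the paper's entire argument for \corref{MCG} is the one-line observation that each curve in the extended Lickorish configuration meets at most three others, so disjointness of supports gives $c_0=3$ in \thmref{IntroCommute}, yielding $\log(2\cdot 3 + 2)+1 = \log 8 + 1$. The only inessential difference is that you invoke the full iff characterization of commuting twists, when only the easy direction (disjoint supports $\Rightarrow$ commuting) is actually needed.
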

   
  Uniform growth rate can also be shown regarding groups \aut, \out, \GL
  and similar groups if the generating set is chosen such that the number
  of generators that do not commute with a given generator is uniformly
  bounded. In fact, these groups have natural generating sets with this
  property. For example, in the case of \aut, let \F be the free group with
  basis $\{a_1,\ldots a_n\}$, and consider the following three types of
  automorphisms of \F. 

  \begin{enumerate}
    \item Inversion: For $1\le i \le n$, $I_i(a_i) = \overline{a_i}$ and
    fixes all other $a_j$. 
    \item Transposition: For $1 \le i \le n-1$, $P_i(a_i) = a_{i+1}$ and
    $P_i(a_{i+1}) = a_i$ and fixes all other $a_j$.
    \item Multiplication: For $1 \le i \le n-1$, $M_i(a_i) = a_ia_{i+1}$
    and fixes all other $a_j$. 
  \end{enumerate}

  The collection of inversions, transpositions, and multiplications
  generate \aut \cite{MKS66,LS77} and is called the set of \emph{local
  Nielsen generators}. For each $s \in S$, the number of elements that do
  not commute with $s$ is at most $7$, we obtain:

  \begin{introcor}
    
    The growth rate of \aut relative to local Nielsen generators
    is bounded by $\log 16+1$.

  \end{introcor}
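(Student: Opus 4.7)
The plan is to apply Theorem~\ref{Thm:IntroCommute} directly, with constant $c_0=7$, which will yield the bound $h_{\aut}\leq \log(2\cdot 7+2)+1=\log 16+1$. So the entire task reduces to verifying that for every local Nielsen generator $s$, at most $7$ of the other local Nielsen generators fail to commute with $s$.

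To do this, I would first observe the crude but useful principle that if the ``active letters'' of two generators are disjoint then the generators commute: $I_i$ acts only on $a_i$, while $P_j$ and $M_j$ act only on $a_j$ and $a_{j+1}$. Hence any generator whose active set is disjoint from the active set of $s$ automatically commutes with $s$, so I only need to enumerate the generators whose active index set meets that of $s$. This immediately bounds the count by a small number depending only on whether $s$ is an inversion, a transposition, or a multiplication, and not on $n$.

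Next I would carry out the three cases. For $s=I_i$ (active set $\{i\}$), the possibly non-commuting generators are $P_{i-1},P_i,M_{i-1},M_i$, at most $4$. For $s=P_i$ (active set $\{i,i+1\}$), the candidates are $I_i, I_{i+1}, P_{i-1}, P_{i+1}, M_{i-1}, M_i, M_{i+1}$, a list of length $7$. For $s=M_i$ (active set $\{i,i+1\}$), the candidates are $I_i, I_{i+1}, P_{i-1}, P_i, P_{i+1}, M_{i-1}, M_{i+1}$, again a list of length $7$. In each case I would confirm by a one-line calculation in \F that these pairs indeed generically fail to commute (e.g.\ $P_iI_i(a_i)=\overline{a_{i+1}}\neq a_{i+1}=I_iP_i(a_i)$; $M_iP_i(a_i)=a_{i+1}\neq a_{i+1}a_i=P_iM_i(a_i)$; $M_{i+1}M_i(a_i)=a_ia_{i+1}a_{i+2}\neq a_ia_{i+1}=M_iM_{i+1}(a_i)$), just to justify using ``at most $7$'' rather than the stronger ``equal to $7$''.

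The ``main obstacle'' is really just the bookkeeping in the $P_i$ and $M_i$ cases; there is no conceptual difficulty, only the need to be careful not to double-count or to miss a neighboring index in the list. Once the count of $7$ is established, the corollary follows directly from Theorem~\ref{Thm:IntroCommute}.
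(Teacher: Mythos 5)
Your proposal is correct and follows the same route as the paper: verify that each local Nielsen generator fails to commute with at most $7$ others (the paper simply asserts this count without the case analysis you supply), then apply Theorem~\ref{Thm:IntroCommute} with $c_0=7$ to get $\log(2\cdot 7+2)+1 = \log 16 + 1$. Your enumeration by disjointness of active index sets and your three cases (inversion gives $\le 4$, transposition and multiplication each give $\le 7$) is exactly the bookkeeping the paper leaves to the reader.
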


  \subsection{Evolving structures}
   
  Another context to apply this philosophy is the setting of evolving
  structures. We follow the footsteps of the work of
  Sleator-Tarjan-Thurston \cite{STT92} where they showed that if a graph is
  allowed to evolve using a set of rule that change the graph locally, then
  the growth rate of the number of possible outcomes after $R$ mutations is
  bounded by a constant depending on the rules of evolution and not the
  size of the graph. This was used in \cite{STT92} to estimate the diameter
  of the space of plane triangulations equipped with the diagonal flip
  metric and in \cite{RT13} to estimate the diameter of the space of cubic
  graphs equipped with the Whitehead move metric. Similar to their work,
  one can also consider the evolution of labeled graphs. Generalizing the
  results in \cite{STT92} slightly, we prove:
  
  \begin{introthm} \label{Thm:IntroGraph}

    Let $G$ be any group and $\Gamma$ be a $G$--labeled trivalent graph
    (see \secref{Graph} for definition). Let $B_R(\Gamma)$ be the set of
    $G$--labeled graphs that are obtained from $\Gamma$ by at most $R$
    splits. Then, \[ \lim_{R \to \infty}\frac{\log \# B_R(\Gamma)}{R} \leq
    3 \log 4. \] That is, the growth rate of $B_R(\Gamma)$ is independent
    of the size and shape of the starting graph $\Gamma$ and of the group
    $G$. 

  \end{introthm}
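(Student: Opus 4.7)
The strategy, following Sleator--Tarjan--Thurston \cite{STT92}, is to encode every sequence of at most $R$ splits starting from $\Gamma$ as a word of length at most $3R + o(R)$ in an alphabet of size $4$. Since distinct graphs in $B_R(\Gamma)$ must arise from distinct encoded sequences, this gives $\#B_R(\Gamma) \leq 4^{3R + o(R)}$, and dividing by $R$ as $R\to\infty$ yields the claimed bound $3\log 4$.

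The first step is to unpack the notion of \emph{split} of a $G$--labeled trivalent graph (set up in \secref{Graph}) and establish two local properties: a single split modifies only a bounded neighborhood of one edge of the current graph, and once the location of the split is specified, the number of possible outcomes, including how the $G$--labels are transformed, is bounded by a universal constant independent of $G$ and of $\Gamma$. The second property is what ultimately allows the final bound to be independent of the group $G$, since the ambient label set does not leak into the count.

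Given a sequence $f_1, \dots, f_R$ producing graphs $\Gamma = \Gamma_0, \Gamma_1, \dots, \Gamma_R$ with $f_i$ acting on an edge $e_i$ of $\Gamma_{i-1}$, I would organize the indices into a rooted forest $T$ by assigning to $i$ the parent $\pi(i) = \max\{j<i : f_j \text{ created or last modified the edge } e_i\}$, and setting $\pi(i)=0$ when $e_i$ was already present in $\Gamma$. Each node of $T$ has bounded out-degree (at most four in the trivalent setting), because each split creates only a bounded number of new edges that later splits can act on. I would then encode the sequence by a depth-first traversal of $T$ in which each step records one symbol from a four-letter alphabet identifying the next child to be visited, together with a constant amount of local data for the split itself. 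Root flips require a global address in $\Gamma$, but this contributes only $O\bigl(|E(\Gamma)|\log|E(\Gamma)|\bigr)$ bits, which is a constant in $R$ and is absorbed into the $o(R)$ term.

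The main obstacle is the amortized bookkeeping that produces precisely the exponent $3R$: the factor of $3$ reflects a subtle decomposition of each split into three independent four-way local choices, and one must verify that the total encoding length never exceeds $3R + o(R)$ regardless of the order of splits or the interaction between the combinatorial moves and the $G$--labels. A naive local encoding would produce a visibly larger constant; extracting the sharp value $3\log 4$ is where the bulk of the technical work lies.
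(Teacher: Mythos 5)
Your high-level philosophy is right---encode each graph in $B_R(\Gamma)$ by a string of length roughly $3R$ over a $4$-letter alphabet, so that $\#B_R(\Gamma) \leq 4^{3R+o(R)}$---but the actual derivation of the exponent $3$ is left undone, and this is the crux of the theorem. You describe the factor of $3$ as reflecting ``a subtle decomposition of each split into three independent four-way local choices'' and then concede that ``extracting the sharp value $3\log 4$ is where the bulk of the technical work lies.'' That is precisely the content the proof must supply; without it you have only asserted the bound, not proven it. Your proposed DFS-traversal encoding of a rooted forest with out-degree $\leq 4$ most naturally gives a per-split cost of (one $4$-way symbol for the traversal) plus (a constant amount of local split data), and there is no visible reason this totals three $4$-way symbols per split rather than some other constant; the analysis is not carried out.

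The paper's mechanism for the factor of $3$ is different from a DFS and is quite concrete. It labels the vertices and edges of $\Gamma_0$ with integers and, as new vertices and edges appear during a split, assigns them the next unused integers. A canonical reordering of the split sequence (built from a commutation lemma, \lemref{Commute}) picks at each stage the ready split destroying the lowest-labeled vertex, making the labeling deterministic. Because a trivalent split creates at most $2$ new vertices and exactly $1$ new edge, the maximum vertex label is $\leq 2n-2+2R$ and the maximum edge label is $\leq 3n-3+R$. The encoding is a pair of functions $\phi\from V \to \{0,1,2,3\}$ (which half-edge at a vertex is involved when the vertex is destroyed, or $0$) and $\psi \from E \to \{0,1,2,3\}$ (the split configuration supported on an edge, or $0$), so the code lives in a set of size $4^{(2n-2+2R)+(3n-3+R)} = 4^{5n-5+3R}$. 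The $3$ is $2+1$: two vertex labels and one edge label per split, not three per-split ``choices.'' A decoding argument then shows the map is injective. I would also flag a minor logical slip in your sketch: ``distinct graphs in $B_R(\Gamma)$ must arise from distinct encoded sequences'' is backwards; what you need is that the encoding is injective on sequences (or on some canonically chosen sequence per graph), from which the count of codes bounds the count of graphs. As written, your proposal has the right shape but a genuine hole at the quantitative heart of the argument.
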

  
  As an application, we can prove a combinatorial version of \corref{MCG}.
  Namely, let $\T(\Sigma)$ be the space of homotopy classes of
  triangulations of the surface $\Sigma$ with $n$ vertices.

  \begin{introthm} \label{Thm:IntroTriangle}
  
    For $T \in \T(\Sigma)$, let $B_R(T)$ be the set of triangulations in
    $\T(\Sigma)$ that are obtained from $T$ using $R$ diagonal flips. Then
    \[ \lim_{R \to \infty} \frac{\log \# B_R(T)}{R} \le 3 \log 4 \] for
    every surface $\Sigma$ and any number of vertices $n$. 

  \end{introthm}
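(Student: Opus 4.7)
The plan is to reduce \thmref{IntroTriangle} to \thmref{IntroGraph} by encoding triangulations of $\Sigma$ as $G$--labeled trivalent graphs in such a way that each diagonal flip on the triangulation corresponds to a single split of the associated labeled graph.

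Given a triangulation $T$, the first step is to form its dual graph $\Gamma$: one vertex for each triangle, and one edge for each pair of triangles sharing a side. Since every triangle has three sides, $\Gamma$ is trivalent, and since $\Sigma$ is oriented, $\Gamma$ inherits a canonical ribbon structure (a cyclic ordering at each vertex). A diagonal flip on $T$ replaces the diagonal of a quadrilateral formed by two adjacent triangles with the opposite diagonal; via the duality, this operation is precisely a split on the corresponding edge of $\Gamma$. Two distinct homotopy classes of triangulations can, however, share the same underlying ribbon graph, so labels are needed to distinguish them. I would label the edges (or half-edges) of $\Gamma$ by elements of a group $G$ built from the symmetric group on the $n$ vertices of $\Sigma$, with each label recording how the corners of the two adjacent triangles are identified along the shared edge. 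Homotopy classes of triangulations in $\T(\Sigma)$ should then correspond injectively to isomorphism classes of such $G$--labeled trivalent graphs.

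The second step is to verify that a diagonal flip, viewed through this correspondence, transforms the labels according to exactly the local rule permitted in the definition of a split of a $G$--labeled graph. Once this is checked, $\# B_R(T)$ is bounded by the number of $G$--labeled graphs reachable from $\Gamma$ by at most $R$ splits, and \thmref{IntroGraph} yields the bound $3 \log 4$ uniformly in $\Sigma$ and $n$.

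The main obstacle will be setting up the labeling scheme so that (i) homotopically inequivalent triangulations are never identified as the same $G$--labeled graph, and (ii) the change in labels induced by a diagonal flip is indeed one of the allowed split moves, rather than something requiring multiple splits or a more exotic local modification. The ribbon structure at each vertex must also be tracked correctly across flips; this is standard, but it needs to be reconciled with the conventions built into the notion of $G$--labeled trivalent graph used in \thmref{IntroGraph}.
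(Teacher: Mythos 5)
Your reduction to Theorem B via the dual trivalent ribbon graph, and your observation that a diagonal flip is dual to an elementary split, are exactly the paper's first moves. The genuine gap is in the proposed labeling. You suggest labeling edges by elements of a group ``built from the symmetric group on the $n$ vertices'' recording how corners of adjacent triangles are identified. But such labels are purely combinatorial: they are preserved by any homeomorphism of $\Sigma$ permuting the triangles appropriately, in particular by any mapping class fixing the vertex set. Consequently, two triangulations related by a nontrivial element of the (pure) mapping class group --- e.g.\ a triangulation of a once-punctured torus and its image under a Dehn twist --- would have \emph{identical} labeled dual graphs, yet are distinct elements of $\T(\Sigma)$. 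Your injectivity requirement (i) therefore fails whenever $\Sigma$ has nontrivial pure mapping class group, which is almost always. A finite group $G$ of combinatorial data cannot see homotopy.

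The paper resolves this by taking $G = \pi_1(\Sigma,x_0)$ and labeling $\Gamma_0$ via a spanning tree: tree edges get the identity, and each non-tree edge gets the element of $\pi_1$ represented by the loop it closes up. The invariant that makes the whole argument work is the ``well-labeled'' condition: for every closed path $p$ in an embedded labeled graph $(\Gamma,f)$, the product of edge labels along $p$ lies in the conjugacy class of $[f(p)] \in \pi_1(\Sigma)$. One checks this property is preserved by the dual split associated to a flip, and then injectivity of $B_R(T_0) \to B_R(\Gamma_0)$ follows: a label-preserving isomorphism of well-labeled dual graphs induces a homeomorphism of $\Sigma$ matching up triangles, and because it preserves the $\pi_1$-conjugacy class of every closed path, that homeomorphism is homotopic to the identity. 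Your second concern, point (ii), is fine --- once the labels take values in $\pi_1$, the label change under a flip is exactly the split rule from Section~\ref{Sec:Graph} --- but the proposal cannot be completed without replacing the symmetric-group labels with fundamental-group labels and introducing the well-labeled invariant.
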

   
  Note that, even though \thmref{IntroTriangle} is a direct analogue of
  \corref{MCG} it does not follow from it. This is because the quotient of
  $\T(\Sigma)$ by \mcg has a size that goes to infinity as the number of
  vertices $n$ approaches infinity. 

  \subsection{Remarks and references}

  Our \thmref{IntroCommute} follows immediately from an upper bound on the
  growth rate of a right-angled Artin group $A(\Theta)$ with defining graph
  $\Theta$, in terms of the maximum degree of the complementary graph \bT
  (\thmref{RAAG}). Other results relating the growth rate of $A(\Theta)$ to
  the shape of $\Theta$ have been obtained in the past. For instance, it
  was shown in \cite{Sco07} that the growth series of $A(\Theta)$ can be
  computed in terms of the clique polynomial of $\Theta$. Similar results
  can be found in \cite{AP14} and \cite{McM14}. However, the degree of \bT
  cannot be recovered from the coefficients of the clique polynomial of
  $\Theta$, so these results are independent from ours. 
  
  Our proof of \thmref{RAAG} is related to normal forms for elements of a
  right-angled Artin group. A normal form for a word representing an
  element in $A(\Theta)$ is obtained by shuffling commuting elements and
  removing inverse pairs of generators of $A(\Theta)$ whenever possible
  (\cite{HM95}). By fixing an ordering of $V(\Theta)$, then every element
  of $A(\Theta)$ admits a unique normal form, obtained by additionally
  shuffling lower-order letters to lower positions whenever possible. In
  our proof of \thmref{RAAG}, we construct a \emph{canonical
  representative} for a given word, obtained similarly by shuffling
  lower-order letters to lower positions. However, we do not need to cancel
  inverse pairs, so the canonical representative of a word may not be in
  normal form. 

  \subsection*{Acknowledgements} 

  We thank Benson Farb for useful conversations following the talk by
  Thurston. We thank the GEAR network for their support. We also thank the
  referee for helpful comments. 

\section{Uniform growth rates}
  
  \label{Sec:RAAGs}

  \subsection{Preliminaries}
  
  Let $G$ be a finitely generated group. By convention, the inverse of an
  element $g \in G$ will be represented by $\overline{g}$; and for any
  subset $S \subset G$, let $\overline{S} = \{ \overline{s} \colon s \in
  S\}$.  A \emph{word} in $S \cup \overline{S}$ is a sequence
  $w=[s_1,\ldots,s_R]$, where $s_i \in S \cup \overline{S}$; $R$ is the
  \emph{length} of $w$. We allow the empty word whose length is $0$. A word
  $w=[s_1,\ldots,s_R]$ represents an element $g \in G$ if $g=s_1 \cdots
  s_R$. (The empty word represents the identity element.) By a
  \emph{generating set} for $G$ we will mean a finite set $S \subset G
  \setminus \{1\}$ such that every element $g \in G$ is represented by a
  word in $S \cup \overline{S}$. The \emph{word length} $\norm{g}_S$ of $g$
  relative to a generating set $S$ is the length of the shortest word in $S
  \cup \overline{S}$ representing $g$. For any $R$, $B_R(G)$ is the set of
  elements of $G$ with word length at most $R$. The \emph{growth rate}
  (also called the entropy) of $G$ relative to $S$ is \[ h_G = \lim_{R \to
  \infty} \frac{\log \# B_R(G)}{R},\] where the above limit exists by
  sub-additivity.

  We remark that the growth rate of $G$ depends on the generating set, but
  positivity of the growth rate does not. The growth rate of $\FF_n$
  relative to a basis is $\log(2n-1)$. If $G$ contains a subgroup
  isomorphic to $\FF_2$, then $h_G$ is strictly positive. See \cite{GdlH97}
  and the references within for more details.

  \subsection{RAAGs} 

  A graph is a 1-dimensional CW complex. It is \emph{simple} if there are
  no self-loops or double edges. 
  
  Let $\Theta$ be a finite simple graph. Let $V(\Theta)$ and $E(\Theta)$ be
  the set of vertices and edges of $\Theta$. An element of $E(\Theta)$ will
  be denoted by $vw$, where $v$ and $w$ are the vertices of the edge. The
  \emph{complementary graph} of $\Theta$ is the graph $\bT$ with $V(\Theta)
  = V(\bT)$ but two vertices span an edge in $\bT$ if and only if they do
  not in $\Theta$.   
  
  The \emph{right-angled Artin group} or RAAG associated to $\Theta$ is the
  group $A(\Theta)$ with the presentation: 
  \[ 
    A(\Theta) = \gen{s_v \text{ for } v \in V(\Theta) \colon [s_v,s_w]=1
    \text{ for } vw \in E(\Theta) }.
  \] 
  The collection $S=\{s_v\}$ will be called the \emph{standard generating
  set} of $A(\Theta)$. We will often ignore the distinction between a
  vertex $v$ and the generator $s_v$.
  
  \begin{theorem} \label{Thm:RAAG}
   
   If the valence of every vertex in $\bT$ is bounded above by a constant
   $c_0$, then the growth rate of $A(\Theta)$ relative to the standard
   generating set $S$ is bounded by $\log (2c_0 + 2)+1$.
   
  \end{theorem}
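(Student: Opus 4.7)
Fix a total order $<$ on $V(\Theta)$ and extend it to $S\cup\overline{S}$ (say lexicographically, using the sign for tie-breaking). The strategy is to attach to each element $g\in A(\Theta)$ with $\norm{g}_S\le R$ a distinguished \emph{canonical} word of length at most $R$ representing $g$, and then bound the number of canonical words of length at most $R$ by a quantity growing like $\bigl((2c_0+2)\,e\bigr)^R$.

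Say that a word $w=[t_1,\ldots,t_R]$ with $t_i=s_{v_i}^{\epsilon_i}$ is \emph{canonical} if it is lex-minimal in its shuffle-equivalence class, where the equivalence is generated by swapping adjacent pairs of commuting letters. The first step would be to establish a local characterization: $w$ is canonical if and only if, for every $i$ and every $j<i$ such that $t_j,t_{j+1},\ldots,t_{i-1}$ all commute with $t_i$ in $A(\Theta)$, one has $v_j\le v_i$. The forward direction is immediate (any violation permits a lex-decreasing shuffle of $t_i$ leftward to position $j$); the converse follows by taking the first position where a hypothetically smaller shuffle-equivalent word differs from $w$ and locating the corresponding letter in $w$. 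Because shuffles preserve the element represented, every $g$ with $\norm{g}_S\le R$ has a canonical representative of length at most $R$, so $|B_R(A(\Theta))|$ is bounded above by the number of canonical words of length at most $R$.

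The heart of the argument is a combinatorial count of canonical words. I would associate to each canonical $w$ its \emph{dependence forest}: the parent of position $i>1$ is $P(i):=\max\{j<i:t_j \text{ and } t_i \text{ do not commute}\}$, with $P(i)=0$ (a root) if no such $j$ exists. The hypothesis on \bT pays off because any non-root letter $t_i$ has vertex $v_i$ that is adjacent in \bT to $v_{P(i)}$, so there are at most $c_0$ vertex options and hence at most $2c_0$ signed-letter options for $t_i$ given its parent. The key point is that ``ascending runs,'' where canonicity only imposes the weak bound $v_{i-1}\le v_i$ with no \bT-restriction, do not actually contribute extra freedom per position: every such letter is still a child in the forest of a uniquely determined earlier node and thus falls under the $2c_0$-per-child count, with additional choices ($+2$ per root) accounted for separately. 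Reading $w$ left to right is a specific linear extension of the dependence forest, and a standard estimate on the number of ordered rooted forests on $R$ vertices yields an amortized factor of $e^R$ for the forest shape, giving a total bound of $\bigl((2c_0+2)\,e\bigr)^R$ on the number of canonical words of length $R$.

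Taking $\log$ and dividing by $R$ gives $h_{A(\Theta)}\le \log(2c_0+2)+1$, as required. The principal obstacle is the counting in the third paragraph: one must carefully set up the encoding so that the $|V(\Theta)|$-dependence apparently lurking in ascending runs is entirely absorbed into the branching structure of the dependence forest, and so that the per-node choices are sharply $2c_0+2$ (rather than a looser $4c_0+4$) with the matching $e^R$ factor coming exclusively from forest topology. Formalizing this bookkeeping---which essentially amounts to proving that every canonical word is reconstructible from a forest decorated by labels, each local label constrained to $\bar\Theta$-neighbors of its parent, and that these decorated forests satisfy the stated enumeration bound---is the technical content of the proof.
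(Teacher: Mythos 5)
Your strategy of passing to a canonical (lex-minimal) representative is the right first move, and it parallels the paper's construction of a ``canonical representative.'' But after that the two arguments diverge substantially, and the part you flag as ``the technical content'' is precisely where the argument as sketched breaks.

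The gap is in the enumeration of decorated dependence forests. You claim that the forest shape contributes an ``amortized factor of $e^R$,'' but the number of unlabeled rooted forests on $R$ nodes grows like $\alpha^R$ with $\alpha\approx 2.956$ (Otter's constant), which strictly exceeds $e\approx 2.718$; there is no standard $e^R$ bound for this class, and without a further structural restriction on which forests actually arise, the product of your two factors overshoots $\bigl((2c_0+2)e\bigr)^R$ for large $c_0$. The ``$+2$ per root'' bookkeeping is also not quite right as stated: a root label is any of the $2n$ signed generators (not a bounded set), and while the roots appear in non-decreasing order and therefore contribute only a factor polynomial in $R$ for fixed $n$, this has to be argued rather than absorbed into a per-node constant. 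You also use, but do not verify, that the canonical word is reconstructible from the isomorphism class of the decorated forest (the greedy ``place the smallest available node next'' rule does work, using canonicity, but this needs a proof, and one must handle the non-decreasing-sibling constraint when counting decorations). In short: the encoding is plausible, the reconstruction lemma is missing, and the forest count you invoke is wrong.

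For comparison, the paper sidesteps the forest entirely. It constructs the canonical representative together with a \emph{dynamically updated integer labeling} $L_i$ of $V(\bT)$: whenever a letter $u_i$ is placed, only its $\bT$-neighbors are relabeled, and they receive fresh labels strictly larger than everything seen so far. A short argument (\lemref{Canonical}) then shows the recorded labels $|\ell_1|\le\cdots\le|\ell_R|\le n+c_0R$ form a non-decreasing integer sequence, so $\#B_R$ is bounded by $2^R\binom{n+R(c_0+1)}{R}$ and the entropy estimate $\log 2+(c_0+1)H\bigl(\tfrac1{c_0+1}\bigr)=\log(2c_0+2)+c_0\log(1+1/c_0)\le\log(2c_0+2)+1$ follows from Stirling. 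This avoids both the forest-shape count and the root bookkeeping: the non-decreasing sequence already encodes the ``ancestry'' information you were trying to capture with the forest, and the binomial coefficient does the work that you hoped $e^R$ would do. If you want to salvage your approach, the place to look is a more careful encoding that ties the forest shape to the label sequence so that you land on a single binomial-type count rather than a product (forest shapes)$\times$(labelings).
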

  
  From \thmref{RAAG}, we derive \thmref{IntroCommute} as corollary.

  \begin{corollary} \label{Cor:Commute} 
     
    Let $S_G$ be a generating set for a group $G$ such that for every $s \in
    S_G$, the number elements of $S_G$ that do not commute with $s$ is
    bounded by $c_0$. Then $h_G \leq \log (2c_0+2)+1$. 

  \end{corollary}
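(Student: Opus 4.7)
The plan is to reduce the corollary directly to \thmref{RAAG} by passing to the right-angled Artin group whose defining graph records the commutation pattern of $S_G$.

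First I would build the \emph{commutation graph} $\Theta$ of $S_G$: the vertex set is $V(\Theta) = S_G$, and two distinct generators $s, s' \in S_G$ span an edge in $\Theta$ if and only if $[s,s']=1$ in $G$. By the hypothesis on $S_G$, every vertex in the complementary graph $\bT$ has valence at most $c_0$, so \thmref{RAAG} gives the bound $h_{A(\Theta)} \leq \log(2c_0+2)+1$ for the growth rate of $A(\Theta)$ with respect to its standard generating set.

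Next I would exhibit a surjective homomorphism $\pi \colon A(\Theta) \to G$ that respects the standard generating sets. Define $\pi$ on generators by $\pi(s_v) = v$ for every $v \in V(\Theta)$. Each defining relation of $A(\Theta)$ has the form $[s_v, s_w]=1$ for $vw \in E(\Theta)$, and this relation is automatically satisfied in $G$ by the very definition of $\Theta$. Hence $\pi$ is a well-defined homomorphism, and it is surjective because its image contains $S_G$, which generates $G$.

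Finally I would compare ball sizes. Any word $w = [s_1,\ldots,s_R]$ in $S_G \cup \overline{S_G}$ representing an element $g \in G$ lifts tautologically to a word of the same length in the standard generating set of $A(\Theta)$, representing some element $\tilde g$ with $\pi(\tilde g)=g$. Thus $\pi\bigl(B_R(A(\Theta))\bigr) \supseteq B_R(G)$, so $\# B_R(G) \leq \# B_R(A(\Theta))$. Taking logs, dividing by $R$, and passing to the limit gives $h_G \leq h_{A(\Theta)} \leq \log(2c_0+2)+1$. There is no real obstacle here; the only thing to verify is that the commutation graph is set up so that $\pi$ exists, which is immediate from its definition.
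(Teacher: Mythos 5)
Your argument is correct and matches the paper's proof of \corref{Commute}: you build the same commutation graph $\Theta$ on $S_G$, observe that $\bT$ has valence bounded by $c_0$, construct the canonical surjection $A(\Theta) \to G$, and conclude $h_G \le h_{A(\Theta)} \le \log(2c_0+2)+1$. The only difference is that you spell out the ball-size comparison $\#B_R(G) \le \#B_R(A(\Theta))$, which the paper leaves implicit.
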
  
  
  \begin{proof}
     
    Let $\Theta$ be the graph with vertex set $S_G$ and $ss' \in E(\Theta)$
    if and only if $[s,s']=1$ in $G$. The natural map from $A(\Theta)$ to
    $G$ taking the standard generating set $S$ to $S_G$ extends to a
    surjective homomorphism, and the hypothesis on $S_G$ implies the
    valence of every vertex in $\bT$ is bounded by $c_0$. All together, we
    obtain $h_G \le h_{A(\Theta)} \le \log(2c_0+2)+1$. \qedhere
    
  \end{proof}

  The rest of the section is dedicated to proving \thmref{RAAG}.  
  
  Given a word $w=[s_1,\ldots,s_R]$ in $S \cup \overline{S}$, the $j$--th
  letter of $w$ is $w(j) = s_j$. A word $w'=[t_1,\dots,t_R]$ is a
  \emph{reordering} of $w$ if $t_1 \cdots t_R = s_1\cdots s_R$ and there is
  a permutation $\sigma$ such that $t_j = s_{\sigma(j)}$. We say the letter
  $s_k$ in $w$ is \emph{ready} for position $i$, $i \le k$,  if $s_k$
  commutes with every $s_j$, for $i \le j\le k$. 
   
  At every vertex $v$ of $\bT$, label the half-edges at $v$ from $1$ to
  $d_v$, where $d_v \le c_0$ is the valence of $v$. Let $n$ be the
  cardinality of $V(\bT)$. Fix a labeling $L_0 \from V(\bT) \to \NN$ whose
  image is $\{1,\ldots n\}$. 

  Fix $w_0=[s_1,\ldots,s_R]$. We will inductively construct a sequence
  $w_1,\ldots,w_R$ of words that reorders $w_0$, in conjunction with a
  sequence $L_1,\ldots,L_R$ of labeling of $V(\bT)$. The final word $w_R$
  will be called the \emph{canonical representative} of $g=s_1 \cdots s_R$
  induced by $w_0$. ($W_R$ depends on $W_0$). Along this process, we
  produce an encoding of the canonical representative by a sequence of
  integers $\ell_1,\ldots,\ell_R$. 
  
  Suppose for $i \ge 0$, $w_i=[u_1,\ldots,u_i,t_{i+1},\ldots, t_R]$, a
  labeling $L_i$ of $V(\bT)$, and a sequence $\ell_1, \ldots, \ell_i$ are
  given. Among $\{t_{i+1},\ldots,t_R\}$, let $U$ be the subset of letters
  that are ready for position $i+1$. Pick $t \in U$ such that $L_i(t)$ is
  minimal among all elements of $U$. Set \[ w_{i+1} = \left[
  u_1,\ldots,u_i,t,t_{i+1},\ldots, \hat{t}, \ldots,t_R \right], \] $u_{i+1}
  = t$. If $t \in S$, then let $\ell_{i+1}=L_i(t)$; if $t \in
  \overline{S}$, then let $\ell_{i+1} = -L_i(t)$. The word $w_{i+1}$ is a
  reordering of $w_i$ and hence of $w_0$ by induction.
 
  We now define the labeling $L_{i+1} \from V(\bT) \to \NN$. Let $n_i$ be
  the largest value of $L_i$. Let $(e_1,\ldots,e_d)$ be the half-edges of
  \bT incident at $t$ listed in order, where $d$ is the valence of $t$. For
  each $e_k$, let $v_k$ be the vertex connected to $t$ by the edge
  associated to $e_k$. We set $L_{i+1}(v_k) = n_i+k$ for each
  $k=1,\ldots,d$, and $L_{i+1}(v) = L_i(v)$ for all other $v \in V(\bT)$. 

  \begin{lemma} \label{Lem:Canonical}
    
    Let $n=\#V(\bT)$. Then 
    \[ 1 \le \left| \lab_1 \right| \le \left| \lab_2 \right| \le
    \cdots \le \left| \lab_R \right| \le n+c_0R. \] 
     
  \end{lemma}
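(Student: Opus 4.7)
The lemma is really three inequalities, and I would handle them in increasing order of difficulty. Positivity $|\ell_j|\ge 1$ is immediate: every $L_i$ takes values in the positive integers by construction, and each $|\ell_j|$ is just some value of $L_{j-1}$. For the upper bound $|\ell_R|\le n+c_0R$, I would induct on $i$ to show that the maximum $n_i$ of $L_i$ satisfies $n_i\le n+c_0 i$: the relabeling step introduces only the $d\le c_0$ new values $n_i+1,\ldots,n_i+d$, so $n_{i+1}\le n_i+c_0$, and $n_0=n$ closes the induction. Since $|\ell_{i+1}|=L_i(t)\le n_i$ for whichever $t$ is picked at step $i+1$, this yields $|\ell_R|\le n+c_0(R-1)\le n+c_0R$.

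The substantive part is monotonicity $|\ell_{i+1}|\le|\ell_{i+2}|$, which I would argue by contradiction. Let $t$ be the letter chosen at step $i+1$ and $s$ the letter chosen at step $i+2$, and suppose $L_{i+1}(s)<L_i(t)=:m$. I would exploit two structural features of the relabeling rule: labels never decrease, and every $\bT$-neighbor of $t$ is assigned a new label strictly exceeding $n_i\ge m$. Together these force $L_i(s)=L_{i+1}(s)<m$ and $s$ to be non-adjacent to $t$ in $\bT$; equivalently, $s$ and $t$ commute in $A(\Theta)$.

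Now the selection rule at step $i+1$ kicks in: since $t$ minimized $L_i$ among letters ready for position $i+1$ and $L_i(s)<L_i(t)$, the letter $s$ was \emph{not} ready for position $i+1$ in $w_i$. Writing $s=t_k$ and $t=t_{k_0}$ in $w_i$, this produces some obstructing index $j\in[i+1,k-1]$ for which $t_j$ fails to commute with $s$; because $s$ commutes with $t$, we must have $j\ne k_0$, so $t_j\ne t$. A direct case analysis on $k<k_0$ versus $k>k_0$, tracking how positions shift when $t$ is moved to position $i+1$, shows that $t_j$ still occupies a position strictly between $i+1$ and the new position of $s$ in $w_{i+1}$; hence $s$ is not ready for position $i+2$ after all, contradicting the choice of $s$. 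The only real obstacle is this position-bookkeeping at the end, but since the only letter that moves is $t$ itself and $s$ commutes with $t$, deleting $t$ from the intervening segment cannot possibly resolve the obstruction left by $t_j$.
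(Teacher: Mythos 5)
Your proof is correct and follows essentially the same route as the paper: both arguments rest on the two structural facts that labels never decrease and that a $\bT$-neighbor of the just-placed letter receives a label exceeding the current maximum, combined with the minimality of the selection rule. The only real difference is organizational — you prove monotonicity by contradiction (and carefully track how positions shift to show the obstruction persists, which the paper glosses over), whereas the paper argues directly via the dichotomy ``$u_i, u_{i+1}$ commute or they don't''; these are the two contrapositive faces of the same implication.
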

  
  \begin{proof}
    
    For each $i \ge 1$, we show $|\lab_i| \le |\lab_{i+1}|$. Let
    $w_R=[u_1,\ldots,u_R]$. We have $|\lab_i|=L_i(u_i)$. For $v \in
    V(\bT)$, $L_i(v) = L_{i+1}(v)$ unless $v$ is in the link of $u_i$; in
    the latter case, $L_{i+1}(v)$ is is bigger than the maximal value of
    $L_i$ If $u_i$ and $u_{i+1}$ do not commute, then $u_{i+1}$ is in the
    link of $u_i$, therefore $L_{i+1}(u_{i+1})$ exceeds the maximal value
    of $L_i$, and in particular $L_{i+1}(u_{i+1}) > L_i(u_i)$. If $u_i$ and
    $u_{i+1}$ commute, then they were both ready for position $i$. In this
    case, $L_i(u_{i+1}) = L_{i+1}(u_{i+1})$, and $u_i$ was chosen precisely
    because its label $L_i(u_i)$ is minimal among all elements in the set
    $u_{i+1},\ldots,u_R$ that were ready for position $i$. We conclude
    $|\lab_i| \le |\lab_{i+1}|$.  
    
    The largest value of $L_{i+1}$ is at most $c_0$ plus the largest value
    of $L_i$. Hence the largest value of $L_R$ is at most $n+c_0R$. This
    bounds all $|\ell_i|$. \qedhere

  \end{proof}
  
  Set $C_R = n+c_0 R$. Let $D_R = \big\{ \pm1, \pm 2, \ldots, \pm C_R,
  C_R+1 \big\}$ and let 
  \[ 
    W_R = \big\{ \left( \lab_1,\ldots,\lab_R \right) \colon \lab_i \in D_R
    \text{ and } |\lab_1| \le \cdots \le |\lab_R| \big\}.
  \] 

  \begin{proposition}

    There exists an embedding of $B_R(G)$ into $W_R$, hence $\#B_R(G) \le
    \#W_R$.
  
  \end{proposition}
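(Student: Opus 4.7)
The plan is to construct an explicit injection $\Phi \from B_R(G) \hookrightarrow W_R$. For each $g \in B_R(G)$, I will fix (by some deterministic rule) a shortest word $w_0=[s_1,\dots,s_k]$ in $S \cup \overline{S}$ representing $g$, where $k = \norm{g}_S \le R$. Applying the canonical representative construction to $w_0$ with the fixed initial labeling $L_0$ produces the sequence of integers $(\lab_1,\dots,\lab_k)$. I then pad with $R-k$ copies of $C_R+1$ at the end and set
\[ \Phi(g) \;=\; \bigl(\lab_1,\dots,\lab_k,\underbrace{C_R+1,\dots,C_R+1}_{R-k \text{ times}}\bigr). \]
By \lemref{Canonical}, $1 \le |\lab_1|\le \cdots \le |\lab_k| \le C_R < C_R+1$, so $\Phi(g) \in W_R$, and $\Phi$ is well defined.

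Next I would verify that $\Phi$ is injective by showing how to reconstruct $g$ from $\Phi(g)$. The position of the first entry equal to $C_R+1$ (or the end of the sequence, if none appears) reveals $k$. The canonical representative $w_R = [u_1,\dots,u_k]$ is then recovered inductively: given $L_i$, starting from the known $L_0$, and given $\lab_{i+1}$, the letter $u_{i+1}$ is the unique generator $s_v$, with sign determined by the sign of $\lab_{i+1}$, where $v \in V(\bT)$ satisfies $L_i(v) = |\lab_{i+1}|$. Once $u_{i+1}$ is identified, the update rule from the construction determines $L_{i+1}$, and the procedure continues. This recovers $g = u_1 \cdots u_k$ uniquely from $\Phi(g)$.

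The only technical point that requires care, and which I expect to be the main (mild) obstacle, is verifying that each $L_i$ remains a bijection from $V(\bT)$ onto a subset of $\NN$, so that the value $\lab_{i+1}$ unambiguously picks out the next letter in the decoding. This follows by a short induction: $L_0$ is a bijection onto $\{1,\dots,n\}$ by construction, and at each step only the labels on vertices in the link of $u_i$ are overwritten, receiving values $n_i+1,\dots,n_i+d$ that strictly exceed every value currently in the image of $L_i$, so injectivity is preserved. Combining the injectivity of $\Phi$ with \lemref{Canonical} then yields $\#B_R(G) \le \#W_R$, as required.
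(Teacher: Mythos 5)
Your proof is correct and follows essentially the same route as the paper: pick a shortest word for each $g$, take its canonical representative, read off the integer code, pad with $C_R+1$, and recover $g$ by rerunning the labeling process from $L_0$. The one point you make explicit that the paper leaves implicit is that each $L_i$ stays injective (so that $|\lab_{i+1}|$ determines a unique vertex during decoding); your argument for this is correct and a welcome addition, but does not change the substance of the proof.
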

  
  \begin{proof}
  
    Let $g \in B_R(G)$ have $\norm{g}_S=r$. Pick any word
    $w=[s_1,\ldots,s_r]$ representing $g$ and let $w_r$ be the canonical
    representative of $g$ induced from $w$. Let $(\ell_1,\ldots,\ell_r)$ be
    the code of $w_r$. If $r<R$, then extend the sequence to $\left(
    \lab_1, \ldots, \lab_r, \lab_{r+1}, \ldots, \lab_R \right)$ by setting
    $\lab_{r+i} = C_R+1$ for all $i=1,\ldots R-r$. By \lemref{Canonical},
    $(\ell_1,\ldots,\ell_R) \in W_R$. This gives a map $B_R(G) \to W_R$. 
    
    To see this is an embedding, we show how to recover $w_r$ and hence $g$
    from the sequence $\left( \lab_1,\ldots,\lab_R \right)$. Recall \bT is
    equipped with a cyclic ordering of the half-edges at every vertex and a
    labeling $L_0$ of the vertices from $1$ to $n$. Let $w_0$ be the empty
    word. Suppose for $0 \le i \le r-1$, $L_i \from V(\bT) \to \NN$ and a
    word $w_i=[u_1,\ldots,u_i]$ are defined. If $\lab_{i+1}=C_R+1$, then
    set $u_{i+1} = u_{i+2} = \cdots u_R = 1 $. Otherwise, Let $v$ be the
    unique vertex in \bT with label $|\lab_{i+1}| =L_i(v)$. Set $u_{i+1}=v$
    if $\lab_{i+1}$ is positive and $u_{i+1} = \overline{v}$ if
    $\lab_{i+1}$ is negative. Let $(v_1,\ldots,v_d)$ be the vertices in the
    link of $u_{i+1}$ listed in cyclic order. Let $n_i$ be the largest
    value of $L_i$. Construct $L_{i+1} \from V(\bT) \to \NN$ by setting
    $L_{i+1}(v_k)=n_i+k$ and $L_{i+1}(u)=L_i(u)$ for all other $u \in \bT$.
    Then $w_r=[u_1,\ldots,u_r]$. \qedhere
  
  \end{proof}

  We now give an upper bound for the growth rate of $\#W_R$, which will
  complete the proof of \thmref{RAAG}.

  \begin{lemma}
   
  $\lim_{R \to \infty} \frac{\log \#W_R}{R} \le \log (2c_0+2)+1$. 
  
  \end{lemma}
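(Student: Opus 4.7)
The plan is to decouple the sign and absolute-value data of the sequences in $W_R$. A sequence $(\ell_1,\ldots,\ell_R) \in W_R$ is determined by the non-decreasing tuple of absolute values $(|\ell_1|,\ldots,|\ell_R|) \in \{1,\ldots,C_R+1\}^R$ together with, for each $i$ with $|\ell_i| \le C_R$, a sign $\pm$. (The symbol $C_R+1$ carries no sign.) Hence I would start with the crude but sufficient bound
\[
  \#W_R \;\le\; 2^R \cdot N_R,
\]
where $N_R$ denotes the number of non-decreasing sequences of length $R$ from $\{1,\ldots,C_R+1\}$.

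Next I would count $N_R$ by the standard stars-and-bars identity, giving
\[
  N_R \;=\; \binom{C_R + R}{R} \;=\; \binom{n + (c_0+1)R}{R}.
\]
For a clean asymptotic bound I would then invoke the elementary inequality $\binom{m}{R} \le (em/R)^R$, which yields
\[
  N_R \;\le\; \left( \frac{e\bigl(n + (c_0+1)R\bigr)}{R} \right)^{\!R}.
\]

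Finally, I would take logarithms and divide by $R$. Since $n$ is a fixed constant while $R \to \infty$, the ratio $e(n + (c_0+1)R)/R$ tends to $e(c_0+1)$. Combining the sign factor and the binomial factor,
\[
  \frac{\log \#W_R}{R} \;\le\; \log 2 \;+\; \log\!\left(\frac{e(n+(c_0+1)R)}{R}\right) \;\longrightarrow\; \log 2 + 1 + \log(c_0+1) \;=\; \log(2c_0+2) + 1,
\]
which is exactly the claimed bound. There is no real obstacle here; the statement reduces to a routine enumeration together with a standard binomial asymptotic, and the only thing worth flagging is that the additive constant $n$ inside $C_R$ disappears in the limit, so the final bound is independent of $|V(\bT)|$.
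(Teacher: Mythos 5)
Your proof is correct, and the overall structure is the same as the paper's: factor out a crude $2^R$ for the sign choices, reduce to counting non-decreasing sequences of absolute values, and count those by stars-and-bars to get $\binom{n+(c_0+1)R}{R}$. Where you diverge is the final asymptotic estimate of the binomial coefficient. The paper invokes Stirling's formula via the binary entropy function, obtaining $\log\binom{p}{q}\sim p\,H(q/p)$, which gives the asymptotically exact value $\log 2 + \log(c_0+1) + c_0\log\bigl(1+\tfrac{1}{c_0}\bigr)$ before rounding up to $\log(2c_0+2)+1$ via $\log(1+x)\le x$. You instead use the elementary inequality $\binom{m}{R}\le(em/R)^R$, which delivers the target bound directly without Stirling or entropy. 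Your route is shorter and more self-contained; the paper's route is slightly more informative because it isolates the true growth constant $\log 2 + (c_0+1)H\bigl(\tfrac{1}{c_0+1}\bigr)$, making visible that the stated bound overshoots by $1 - c_0\log(1+1/c_0)$. For the lemma as stated, either works equally well.
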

  
  \begin{proof}
   
   Suppose $p(R)$ and $q(R)$ are two functions of $R$ with $\lim_{R \to
   \infty} \frac{p(R)}{q(R)} = \frac{1}{\epsilon}$. Then, using Stirling's
   formula, $\log \begin{pmatrix} p \\ q \end{pmatrix}$ is asymptotic to $p
   H(\epsilon)$ as $R \to \infty$, where
   \[ H(\epsilon) = \epsilon \log \frac{1}{\epsilon} + (1-\epsilon) \log
   \frac{1}{1-\epsilon} \]
   is the binary entropy function. (See \cite[Ch. 1]{Mac03}.)

   For any $R \ge 1$ and $C\ge R$,  by a simple counting argument, the set
   \[ W(R,C) = \big\{ (x_1,\ldots,x_R) \colon x_i \in
   \{1,\ldots,C\}, x_1 \le \cdots \le x_R \big\}\]  has cardinality
   $\#W(R,C) = \begin{pmatrix} C+R-1 \\ R \end{pmatrix}.$

   Let $C=C_R+1=n+c_0R+1$. We have: \[ \#W_R \le 2^R \begin{pmatrix}
   n+R(c_0+1)\\ R \end{pmatrix} \quad \text{and} \quad \lim_{R \to \infty}
   \frac{n+R(c_0+1)}{R} = c_0 + 1. \]
   Therefore,
   \begin{align*} 
     \lim_{R \to \infty} \frac{\log \#W_R}{R}
     & \le 
     \lim_{R \to \infty} \frac{\log 2^R
     \begin{pmatrix} 
     n+R(c_0+1) \\ R 
     \end{pmatrix} }{R} 
    = \lim_{R \to \infty} \frac{ R\log 2 + \Big(  n+R(c_0+1)
    \Big) H\left( \frac{1}{c_0+1} \right)}{R}\\
    & = \log 2 + (c_0+1) H\left( \frac{1}{c_0+1} \right) 
    = \log 2 + \log(c_0+1) + c_0 \log \left(1+\frac{1}{c_0} \right)
    \\ 
    &\le \log (2 c_0+2) + 1. \qedhere
   \end{align*} 

  \end{proof}
  
\section{Evolving structures on $G$--Labeled graphs}
  
  \label{Sec:Graph}

  A graph is is oriented if each edge is oriented. For any edge $e$ of an
  oriented graph, denote by $\init(e)$ and $\term(e)$ the initial and
  terminal vertex of $e$. If $e$ is a \emph{loop}, then $\init(e) =
  \term(e)$. The orientation of $e$ induces an orientation on each
  half-edge of $e$: the half-edge $e_l$ containing $\init(e)$ is oriented
  so that $\init(e_l) = \init(e)$ ($\term(e_l)$ is a point in the interior
  of $e$), and the half-edge $e_r$ containing $\term(e)$ is oriented so
  that $\term(e_r) = \term(e)$.   

  Given a group $G$, an oriented graph is $G$--labeled if each edge is
  labeled by an element of $G$. Two $G$--labeled graphs are
  \emph{equivalent} if one is obtained from the other by reversing the
  orientation of some of the edges and relabeling those edges with the
  inverse words. This defines an equivalent relation on the set of
  $G$--labeled graphs. Fix $n$ and let $\G(G)$ be the set of equivalent
  classes of trivalent $G$--labeled graphs of rank $n$. (Recall the rank of
  a graph is the rank of its fundamental group.) We now consider operations
  that derive from an element in $\G(G)$ another element in $\G(G)$. 
 
  \begin{figure}[htp!]
    \begin{center}
      \includegraphics{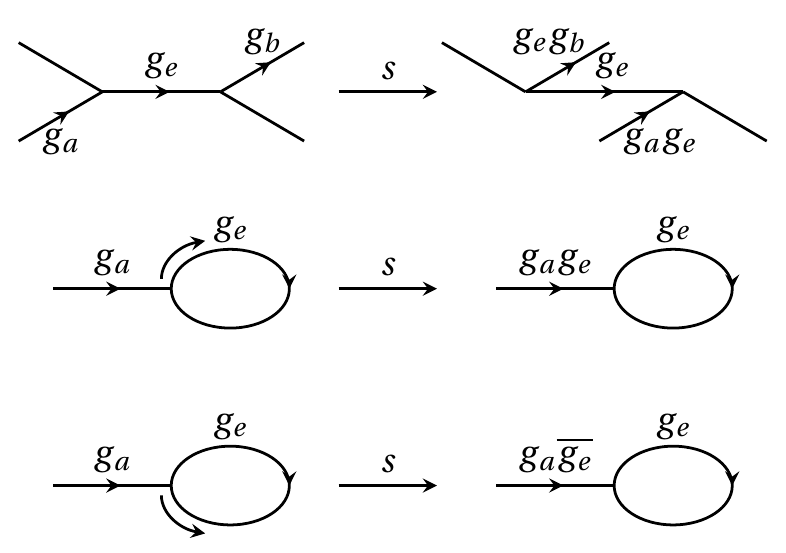}
    \end{center}
    \caption{Double split and loop split}
    \label{Fig:Split}
  \end{figure}
  
  Let $\Gamma \in \G(G)$. Let $e$ be an edge with label $g_e$. There are
  two \emph{types} of edges in $\Gamma$: loop or non-loop. First assume $e$
  is not a loop. Choose a half-edge $\tilde{a}$ (not a half-edge of $e$)
  incident at $i(e)$, and let $a$ be the edge associated to $\tilde{a}$
  with label $g_a$. Disconnect $\tilde{a}$ from $\init(e)$ and reattach it
  to $\term(e)$, while changing the label $g_a \to g_a g_e$ if
  $\term(\tilde{a}) = \init(e)$, or $g_a \to \overline{g_e} g_a$ if
  $\init(\tilde{a}) = \init(e)$. We call this a \emph{forward split} along
  $e$. A forward split along $e$ is well defined: if we reverse the
  orientation of $a$ and invert $g_a$, then the resulting graph is
  equivalent. Similarly, take a half-edge $\tilde{b}$ incident at
  $\term(e)$. A \emph{backward split} along $e$ is obtained by
  disconnecting $\tilde{b}$ from $\term(e)$ and reattaching it to
  $\init(e)$, while changing the label $g_b \to g_e g_b$ if $\init(\tilde
  b) = \term(e)$, or $g_b \to g_b \overline{g_e}$ if $\term(\tilde{b}) =
  \init(e)$. This is again well defined. A \emph{double split} along $e$
  (see \figref{Split}) is the composition of a forward and a backward split
  along $e$. The resulting graph from a double split is trivalent, unlike
  from a backward or forward split alone. If we reverse the orientation of
  $e$ and invert $g_e$, then a forward split along $e$ becomes a backward
  split along $e$ and vice versa. Therefore, a double split is well-defined
  on the equivalent class of $\Gamma$.
  
  For a loop $e$, let $a$ be the edge connected to $e$ with label $g_a$. A
  \emph{forward split} along $e$ changes the label $g_a \to g_a g_e$ if
  $\term(a) = \init(e)$, or $g_a \to \overline{g_e} g_a$ if $\init(a) =
  \init(e)$. A \emph{backward split} changes the label $g_a \to g_a
  \overline{g_e}$ if $\term(a) = \init(e)$, or $g_a \to g_e g_a$ if
  $\init(a) = \term(e)$ (see \figref{Split}). By a \emph{loop split} along
  $e$ we will mean either a forward or a backward split along $e$. This is
  again well defined on the equivalent class of $\Gamma$.
 
  For any edge $e$ of $\Gamma$, a \emph{split} along $e$ will mean either a
  double split or a loop split depending on the type of $e$. We will
  represent a split along $e$ by $\Gamma \split \Gamma'$ and call
  $e=\supp(s)$ the \emph{support} of $s$. 

  Fix $\Gamma_0 \in \G(G)$. A derivation $D=[s_1,\ldots,s_R]$ of length $R$
  is a sequence of splits \[ \Gamma_0 \splitt{s_1} \Gamma_1 \splitt{s_2}
  \cdots \splitt{s_R} \Gamma_R. \] Set $\Gamma_i =
  [s_1,\ldots,s_i](\Gamma_0)$; also write $\Gamma_R = D(\Gamma_0)$. We will
  say $\Gamma_R$ is derived from $\Gamma_0$ by $D$. Let $B_R(\Gamma_0)$ be
  the set of all trivalent graphs (up to equivalence) that are derived from
  $\Gamma_0$ by a derivation of length at most $R$. Our main result is
  \thmref{IntroGraph}, restated below. 
    
  \begin{theorem} \label{Thm:Graph}
    
    For any $\Gamma_0 \in \G(G)$, \[ \lim_{R \to \infty} \frac{\log \#
    B_R(\Gamma_0)}{R} \le 3 \log 4.\]
    
  \end{theorem}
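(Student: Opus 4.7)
The plan is to adapt the canonical-representative coding of the proof of \thmref{RAAG} to the setting of derivations, and then count codes via Stirling's formula, as in the final lemma of \secref{RAAGs}. Say two splits $s$ and $s'$, supported on edges $e$ and $e'$ of the same intermediate graph, \emph{commute} when $e$ and $e'$ share no vertex; in that case the two operations act on disjoint portions of the graph (none of the half-edges moved by $s$ is incident to $e'$) and so may be performed in either order without changing the resulting equivalence class of $G$--labeled graph. Fix once and for all a cyclic ordering of the three half-edges at each vertex, propagated through intermediate graphs by a standard convention at each split, and fix an initial labeling $L_0$ of the edges of $\Gamma_0$ by $\{1,\ldots,n\}$.

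Given a derivation $D=[s_1,\ldots,s_R]$ with $D(\Gamma_0)=\Gamma_R$, I would inductively construct a reordered derivation $[s'_1,\ldots,s'_R]$ together with edge-labelings $L_i$ on the intermediate graphs exactly as in \secref{RAAGs}. At step $i$, call an unplaced split \emph{ready} for position $i+1$ if it commutes with every unplaced split currently preceding it; among the ready splits, pick the one whose support has minimal $L_i$--label, call it $s'_{i+1}$, and apply it. The new labeling $L_{i+1}$ agrees with $L_i$ on all edges whose neighborhoods are untouched, and bumps the labels of the (at most four) edges incident to the endpoints of $\supp(s'_{i+1})$ to the next available integers, listed according to the cyclic order at the relevant vertices. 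Encode $s'_{i+1}$ by an integer $\ell_{i+1}$ whose absolute value is $L_i(\supp(s'_{i+1}))$ and whose sign together with a small residue records the remaining discrete data of the split (double vs.\ loop, and which of the incident half-edges is moved); this is a uniformly bounded choice per step. The analogue of \lemref{Canonical} gives $|\ell_1|\le \cdots \le |\ell_R|$ and $|\ell_i|\le n+cR$ for a constant $c$ bounded by the number of edges relabeled per split.

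Counting sequences $(\ell_1,\ldots,\ell_R)$ with these constraints and running the Stirling computation of the final lemma of \secref{RAAGs} yields a bound of the form $\log m + (c+1)H\!\left(\tfrac{1}{c+1}\right)$, where $m$ is the number of discrete split choices per step; the constants $c$ and $m$ arising from the combinatorics of trivalent $G$--labeled graphs are chosen so that this expression is at most $3\log 4=\log 64$.

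The main obstacle is the commutation step: one must check carefully that whenever two consecutive splits $s$ and $s'$ have vertex-disjoint supports, the $G$--labels adjusted by $s$ are independent of those adjusted by $s'$, so that swapping them gives the same equivalence class of $G$--labeled graph, and consequently that the reordering still produces $\Gamma_R$. A secondary issue is that splits alter the local combinatorial type of their support (loops and non-loops behave differently, and the number of newly relabeled edges depends on the type), so the labeling update and the encoding scheme must be set up so that $c$ and $m$ are \emph{uniformly} bounded across both cases and tight enough to deliver the stated constant $3\log 4$ rather than something larger.
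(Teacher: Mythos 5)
Your high-level plan (canonical reordering by a greedy readiness scheme, then count codes) parallels the paper's, but you should be aware that the paper's encoding is \emph{not} the monotone-sequence scheme of the RAAG proof: the paper labels the vertices of the evolving graph from $\{1,\dots,2n-2+2R\}$ and the edges from $\{1,\dots,3n-3+R\}$, and encodes a canonical derivation by a \emph{pair of functions} $\phi\from V\to\{0,1,2,3\}$ and $\psi\from E\to\{0,1,2,3\}$ (recording, respectively, which half-edge at a destroyed vertex is the support, and which of the four configurations the split uses). The count $4^{5n-5+3R}$ then gives $3\log4$ directly with no Stirling estimate. Your route is a legitimate alternative, and in fact, with the natural constants it produces a \emph{smaller} bound: each split necessitates at most $c=5$ new edge labels (the created middle edge plus the four outer edges incident to the destroyed vertices, since any of these is a possible support for a subsequently unblocked split), and there are at most $m=4$ configurations per split, giving $\log 4 + 6\,H(1/6)\approx 4.09 < 3\log 4 \approx 4.16$. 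But you did not carry out this computation; you only asserted that ``the constants are chosen so that this expression is at most $3\log4$,'' and the reader cannot check the theorem from your proposal as written.

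The more serious gap is your commutation argument. Your parenthetical — that none of the half-edges moved by $s$ is incident to $e'$ — is true, but the conclusion that $s$ and $s'$ therefore ``act on disjoint portions of the $G$--labeled graph'' is false. A double split on $e$ changes the $G$--labels of up to two edges adjacent to the vertices of $e$, and such an edge $a$ may also have its other endpoint on $e'$, in which case $s'$ changes its $G$--label as well. Vertex-disjointness of the supports does \emph{not} make the label changes independent. This is exactly the content of \lemref{Commute}, which handles a common affected edge $a$ by a direct local computation (illustrated in \figref{Commute}) showing that $s_k\circ s_{k-1}$ and $s_{k-1}\circ s_k$ produce the same label on $a$. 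You identify this step as ``the main obstacle,'' which is correct, but the disjointness reasoning you offer does not discharge it; the explicit label-composition check is required.
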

  
  The main idea behind the proof of \thmref{Graph} is to give a normal form
  for a derivation. This was done in \cite{STT92}  for unlabeled graphs. It
  turns out labeled graphs do not pose significant additional difficulties.
  We are also careful to obtain an explicit upper bound for the growth rate
  of $\#B_R(\Gamma)$. 

  \begin{figure}[htp!]
    \begin{center}
      \includegraphics{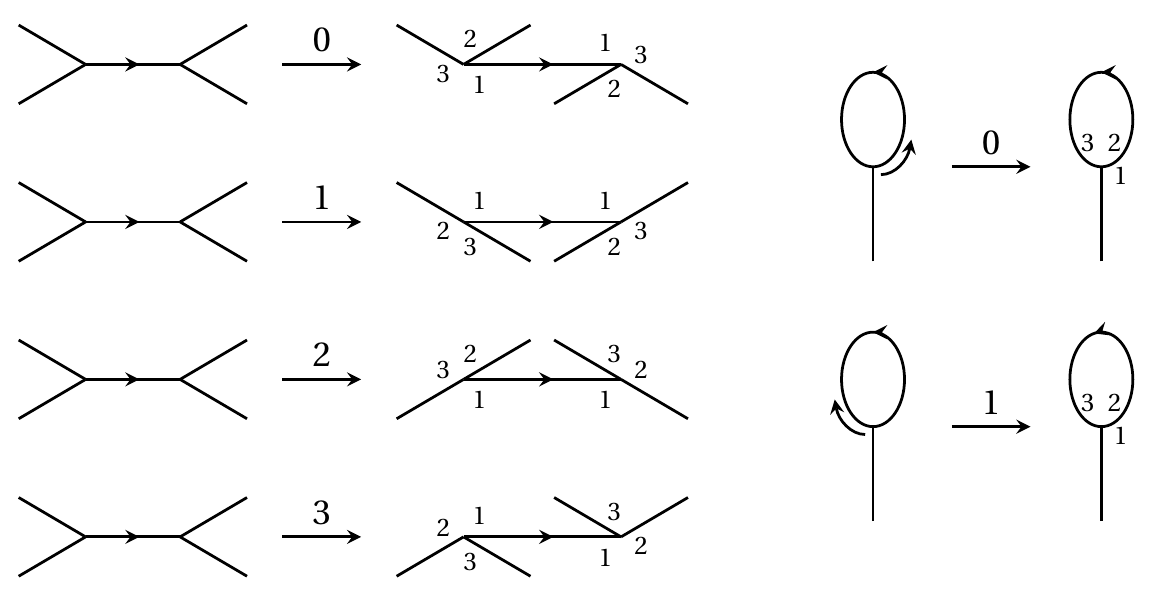}
    \end{center}
    \caption{Configurations of splits}
    \label{Fig:Configurations}
  \end{figure}
  
  A split $\Gamma \splitt{s} \Gamma'$ defines a bijection between the edges
  of $\Gamma$ and $\Gamma'$. For any edge $e$ in $\Gamma$, let $s(e)$ be
  its image in $\Gamma'$. We say $\supp(s)$ and its vertices are
  \emph{destroyed} by $s$, and $s(\supp(s))$ and its vertices are
  \emph{created} by $s$. All other vertices of $\Gamma$ \emph{survive} $s$.
  An edge of $\Gamma$ survives $s$ if all of its vertices survive $s$.

  Given $\Gamma \splitt{s} \Gamma'$. If a representative of $\Gamma$ is
  chosen, then $s$ naturally induces representative for $\Gamma'$. Fix a
  representative in the equivalent class of $\Gamma_0$. This way, for any
  derivation $D=[s_1,\ldots,s_R]$, we can inductively define a
  representative for each $\Gamma_i = [s_1,\ldots,s_i](\Gamma_0)$.

  We will refer to \figref{Configurations} for the following discussion.
  For each vertex $v$ of $\Gamma_0$, label the half-edges at $v$ from $1$
  to $3$ so they can be cyclically ordered. Let $D=[s_1,\ldots,s_R]$ be a
  derivation. We will cyclically order the half-edges at each vertex of
  $\Gamma_i=[s_1,\ldots,s_i](\Gamma_0)$ and \emph{label} each $s_i$ as
  follows. Let $X$ be a fixed planar binary tree with four valence-$1$
  vertices. The distinguished middle edge of $X$ is oriented (see
  \figref{Configurations}). Let $P$ be the planar graph which is the wedge
  of an interval and an oriented loop (also see \figref{Configurations}).
  Now suppose for $i\ge 0$, the half-edges of $\Gamma_i$ are labeled. Let
  $e$ be the support of $s_{i+1}$ in $\Gamma_i$. If $e$ is not a loop, then
  the cyclic ordering at $\init(e)$ and $\term(e)$ allows us to identify a
  contractible neighborhood of $e$ with $X$. The four configurations in the
  left column of \figref{Configurations} represent all possible double
  splits with support the middle edge of $X$. Record the label of the
  configuration that $s_{i+1}$ identifies with; this is the label of
  $s_{i+1}$. Similarly, if $e$ is a loop, then identify a neighborhood of
  $e$ with $P$. Label $s_{i+1}$ by 0 if $s_{i+1}$ is a forward split along
  $e$ and label $s_{i+1}$ by $1$ otherwise (see \figref{Configurations}).
  Let $\ell \in \{0,1,2,3\}$ be the label of $s_{i+1}$. Note that we always
  know if $e$ is a loop or not so there is no confusion with the
  duplication of the labels $0$ and $1$. For each vertex $v$ of
  $\Gamma_{i+1}$, if $v$ is not created by $s_{i+1}$, then the half-edges
  at $v$ will inherit their labels from $\Gamma_i$; otherwise, label the
  half-edges at $v$ from $1$ to $3$ according to the right side of
  configuration $\lab$ in \figref{Configurations}. 
  
  Let $D=[s_1,\ldots,s_R]$. Compute the label of each $s_i$ from above. Fix
  $i \ge 0$ and let $e$ be any edge in $\Gamma_i$. For $k > i+1$, we will
  say $e$ \emph{survives} $[s_{i+1},\ldots,s_{k-1}]$ if for all
  $j=i+1,\ldots,k-1$, the image of $e$ in $\Gamma_{j-1}$ survives $s_j$. In
  particular, $e$ remains the same type from $\Gamma_i$ to $\Gamma_{k-1}$.
  Let $e_i$ be the preimage of $\supp(s_k)$ in $\Gamma_i$. We say $s_k$ is
  \emph{ready} for $\Gamma_i$ if $e_i$ survives $[s_{i+1},\ldots,s_{k-1}]$.
  In this case, we can apply $s_k$ to $\Gamma_i$ with support $e_i$ using
  the label of $s_k$; this is well-defined since $e_i$ is the same type as
  $\supp(s_k)$.

  Consider \[ \Gamma_{k-2} \splitt{s_{k-1}} \Gamma_{k-1} \splitt{s_k}
  \Gamma_k.\] Suppose $s_k$ is ready for $\Gamma_{k-2}$. Apply $s_k$ to
  $\Gamma_{k-2}$ and let $\Gamma_{k-1}'$ be the resulting graph. Propagate
  the labels of half-edges from $\Gamma_{k-2}$ to $\Gamma_{k-1}'$ as
  before. Since $e_{k-2}$ and $e=\supp(s_{k-1})$ are disjoint in
  $\Gamma_{k-2}$, $e$ survives $s_k$, so we may apply $s_{k-1}$ to
  $\Gamma_{k-1}'$ with support $s_k(e)$ using the label of $s_{k-1}$. Let
  \[ \Gamma_{k-2} \splitt{s_k} \Gamma_{k-1}' \splitt{s_{k-1}} \Gamma_k'\]
  be the derivation obtained by switching the order of $s_{k-1}$ and $s_k$.
  We claim the following:
  
  \begin{lemma} \label{Lem:Commute}
    
    With the same notation as above. If $s_k$ is ready for $\Gamma_{k-2}$,
    then $s_{k-1}$ and $s_k$ commute; that is, $\Gamma_k = \Gamma_k'$.

  \end{lemma}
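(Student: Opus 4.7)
The plan is to check, by a case analysis, that when $\supp(s_{k-1})$ and the preimage $e_{k-2}$ of $\supp(s_k)$ are vertex-disjoint in $\Gamma_{k-2}$, the two splits have no interaction and therefore commute as operations on the $G$--labeled graph.

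First, I would interpret the readiness hypothesis concretely: since the only vertices destroyed by a split are the endpoints of its support, the assumption that $e_{k-2}$ survives $s_{k-1}$ is equivalent to saying that $e' := \supp(s_{k-1})$ and $e_{k-2}$ share no vertex. Then I would verify that the configuration label attached to $s_k$ from $\Gamma_{k-1}$ agrees with the configuration one reads off at $e_{k-2}$ directly in $\Gamma_{k-2}$. Because $\init(e_{k-2})$ and $\term(e_{k-2})$ survive $s_{k-1}$, the labels on the three half-edges emanating from each of these vertices are carried verbatim from $\Gamma_{k-2}$ to $\Gamma_{k-1}$; only the far ends of some adjacent edges may have been moved, and this does not change the local cyclic ordering at the endpoints of $e_{k-2}$. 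Hence $s_k$ identifies the same configuration, and selects the same half-edges to modify, whether performed in $\Gamma_{k-1}$ or directly in $\Gamma_{k-2}$. A symmetric argument applies to $s_{k-1}$ acting on $\Gamma_{k-1}'$ via support $s_k(e')$.

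Next, ignoring the edge labels, I would observe that the combinatorial rearrangements performed by the two splits are disjoint: each split only detaches and reattaches half-edges at vertices of its own support, so their effects on the underlying oriented graph commute. It remains to check that the $G$--label modifications commute. An edge $a$ has its label changed by $s_{k-1}$ only if one of its half-edges is selected at a vertex of $e'$, and similarly by $s_k$ only if one of its half-edges is selected at a vertex of $e_{k-2}$. The only edges potentially touched by both are those with one endpoint in $e'$ and the other in $e_{k-2}$. For such an edge $a$, the single orientation forces its half-edges at the two support vertices to have opposite relative orientations, and inspection of the label-change rules in \secref{Graph} shows that this forces one modification to be a left multiplication and the other a right multiplication by the appropriate $g_{e'}^{\pm 1}$ and $g_{e_{k-2}}^{\pm 1}$. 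Left and right multiplications commute, so the two label updates commute.

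The main obstacle is bookkeeping: one must patiently run through each combination of forward versus backward split, loop versus non-loop support, and the two orientations of a shared adjacent edge, to confirm that the placement of the multiplication always ends up on opposite sides at the two ends. The loop case is simpler since a loop admits only one adjacent edge. Once the commutativity of the combinatorial rearrangement and of the label updates are both established, the derivations $\Gamma_{k-2}\to\Gamma_{k-1}\to\Gamma_k$ and $\Gamma_{k-2}\to\Gamma_{k-1}'\to\Gamma_k'$ produce identical representatives, and hence $\Gamma_k = \Gamma_k'$ as equivalence classes of $G$--labeled graphs.
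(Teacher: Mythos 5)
Your argument is correct and follows the same overall strategy as the paper: you reduce to the case where the two splits affect a common edge $a$, note that $a$ must then connect a vertex of $\supp(s_{k-1})$ to a vertex of $e_{k-2}$, and verify that the resulting label modifications commute. The paper at this point simply declares that the claim ``follows from considering different cases,'' illustrates one case in a figure, and stops. Your proof replaces that case enumeration with a single clean observation: the label-change rule for a split always performs a left multiplication at a vertex that is the initial vertex of the affected edge and a right multiplication at a vertex that is its terminal vertex; since $a$ has one end at each support, one modification is a left multiplication and the other a right multiplication, which commute regardless of the specific forward/backward/loop combination. This saves the tedium and makes the commutativity conceptually transparent.

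You also spell out two points that the paper leaves implicit: that the survival of $\init(e_{k-2})$ and $\term(e_{k-2})$ guarantees the half-edge labels, and hence the configuration that $s_k$ selects, are read off identically from $\Gamma_{k-2}$ and $\Gamma_{k-1}$; and that because $\supp(s_{k-1})$ and $e_{k-2}$ are vertex-disjoint, neither edge is itself affected by the other split, so the multiplying elements $g_{e'}^{\pm 1}$ and $g_{e_{k-2}}^{\pm 1}$ are the same in either order. These are exactly the pieces that make the commutativity of the underlying combinatorial rearrangement and of the label updates both go through. In short: same route as the paper, but with the case analysis compressed into a structural observation and the well-definedness issues made explicit.
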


  \begin{figure}[htp!]
    \begin{center}
      \includegraphics{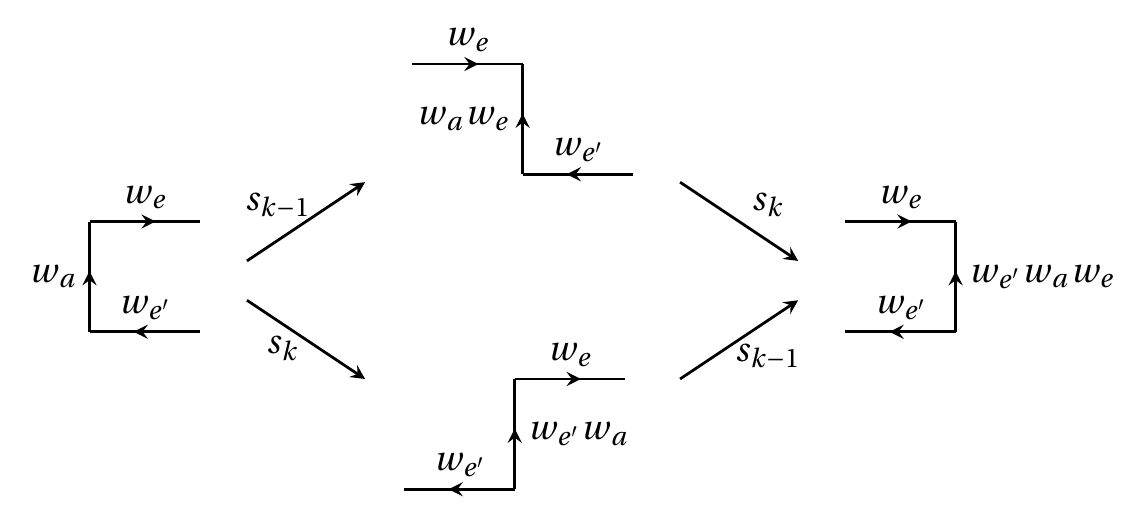}
    \end{center}
    \caption{If $s_k$ is ready for position $k-2$, then $s_{k-1}$ and $s_k$
    commute. }
    \label{Fig:Commute}
  \end{figure}

  \begin{proof}

    For any split $s$, we say an edge is \emph{affected} by $s$ if its
    label is changed by $s$. Any split affects at most two edges. Let $e$
    and $e'$ be the supports of $s_{k-1}$ and $s_k$ in $\Gamma_{k-1}$
    respectively. If $s_{k-1}$ and $s_k$ do not affect the same edge in
    $\Gamma_{k-2}$, then they clearly commute. So let $a$ be an edge in
    $\Gamma_{k-2}$ affected by both $s_{k-1}$ and $s_k$. $a$ must share a
    vertex with both $e$ and $e'$. Since $e$ and $e'$ are disjoint, $a$
    cannot be a loop. The proof that the labels of $s_{k-1} \circ s_k(a)$
    and $s_k \circ s_{k-1}(a)$ are the same now follows from considering
    different cases. The proof in all cases are similar. \figref{Commute}
    shows the case when neither $e$ and $e'$ are loops and $\term(e') =
    \init(a)$ and $\term(a) = \init(e)$. Since $s_k \circ s_{k-1}$ and
    $s_{k-1} \circ s_k$ affect the edges labels the same way, $\Gamma_k' =
    \Gamma_k$. \qedhere
    
  \end{proof}
   
  Let $D=[s_1,\ldots,s_R]$. For any $i \le k-1$, if $s_k$ is ready for
  $\Gamma_i$, then $s_k$ is ready for $\Gamma_j$, for all $i \le j \le
  k-1$. By applying \lemref{Commute} $k-i$ times, we see that 
  \[ D' = [s_1,\ldots,s_i,s_k,s_{i+1},\ldots, \widehat{s_k},
  \ldots, s_R] \] is a well defined derivation and $D(\Gamma_0) =
  D'(\Gamma_0)$.

  Set $D_0 = D$ and let $\Gamma_R = D(\Gamma_0)$. Inductively, we will
  construct a sequence $D_1$, \ldots, $D_R$ of derivations such that
  $D_j(\Gamma_0) = \Gamma_R$ for all $j=1,\ldots,R$. The final derivation
  $D_R$ will be called the \emph{canonical derivation} of $\Gamma_R$ coming
  from $D_0$. 
  
  Let $M=2n-2$ be the number of vertices of $\Gamma_0$. \emph{Label} each
  vertex of $\Gamma_0$ by a distinct integer from $1$ to $M$. Similarly,
  \emph{label} the edges of $\Gamma_0$ from $1$ to $N$, where $N=3n-3$ is
  the number of edges of $\Gamma_0$. 
 
  Suppose for $i \ge 0$, $D_i=[u_1,\ldots,u_i,t_{i+1},\ldots,t_R]$ has been
  constructed. Also, suppose the vertices and edges of $\Gamma_i'$ have
  been labeled, where $\Gamma_i' = [u_1,\ldots,u_i](\Gamma_0)$. Let $U$ be
  the subset of $\{t_{i+1},\ldots,t_R\}$ consisting of splits that are
  ready for $\Gamma_i'$. Pick $t \in U$ such  that $t$ destroys the vertex
  of $\Gamma_i'$ with the lowest label. Set \[ D_{i+1} = [u_1, \ldots, u_i,
  t, t_{i+1}, \ldots, \hat{t}, \ldots t_R]. \] Set $u_{i+1} = t$ and let
  $\Gamma_{i+1}'=[u_1,\ldots,u_{i+1}](\Gamma_0)$. Let $M_i$ and $N_i$ be
  the maximal vertex and edge label of $\Gamma_i'$. Let $f$ be the edge in
  $\Gamma_{i+1}'$ created by $t$. Label $f$ by $N_i+1$. If $f$ is a loop,
  then label its vertex by $M_i+1$. If $f$ is not a loop, then label
  $\init(f)$ by $M_i+1$ and $\term(f)$ by $M_i+2$. All other vertices and
  edges of $\Gamma_{i+1}'$ will inherit their label from $\Gamma_i'$. This
  completes the construction.

  Since $\Gamma_0$ has $2n-2$ vertices and at most two vertices are created
  in each stage, the maximal vertex label of $\Gamma_R$ is at most
  $2n-2+2R$. Similarly, the maximal edge label of $\Gamma_R$ is at most
  $3n-3+R$. Let
  \[ V = \{1,\ldots, 2n-2+2R \}, \quad E = \{1, \cdots, 3n-3 + R\}. \] Let
  $F_R$ be the set of all pairs of functions $(\phi,\psi)$, where $\phi
  \from V \to \{0,1,2,3\}$ and $\psi \from E \to \{0,1,2,3\}$. 
  
  \begin{proposition}
    
    There exists an embedding of $B_R(\Gamma_0)$ into $F_R$, hence
    $\#B_R(\Gamma_0) \le \#F_R$.

  \end{proposition}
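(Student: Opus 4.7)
The plan is to define an injection $B_R(\Gamma_0) \to F_R$ via the canonical derivation. Given $\Gamma \in B_R(\Gamma_0)$, I would pick any derivation $D$ of length $R' \le R$ from $\Gamma_0$ to $\Gamma$ and form the canonical derivation $D_R = [u_1,\ldots,u_{R'}]$ defined just above. For each vertex label $v \in V$, set $\phi_\Gamma(v) \in \{1,2,3\}$ to be the half-edge index at $v$ belonging to $\supp(u_i)$, where $u_i$ is the (unique) step of $D_R$ destroying $v$, and $\phi_\Gamma(v)=0$ if no such step exists. For each edge label $e \in E$, set $\psi_\Gamma(e) \in \{0,1,2,3\}$ to be the label of $u_i$ when $e=\supp(u_i)$, and $\psi_\Gamma(e)=0$ otherwise. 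Since both loop-split and double-split labels lie in $\{0,1,2,3\}$ and each vertex of each $\Gamma_{i-1}'$ is trivalent, this yields a well-defined element $(\phi_\Gamma, \psi_\Gamma) \in F_R$.

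The main obstacle is to establish the following claim, on which the injectivity argument rests: in $D_R$, the vertex destroyed at step $i$ has the minimum label in $V(\Gamma_{i-1}')$ among all vertices eventually destroyed in $D_R$. I would prove this by contradiction. Suppose some $v \in V(\Gamma_{i-1}')$ with label smaller than that of the vertex destroyed by $u_i$ is destroyed only at step $j > i$. Then $v$ survives $u_{i+1},\ldots,u_{j-1}$, so none of these splits touches $v$, and therefore the three edges incident to $v$ are preserved from $\Gamma_{i-1}'$ to $\Gamma_{j-1}'$. Consequently, the preimage of $\supp(u_j)$ in $\Gamma_{i-1}'$ is one of these edges and survives $[u_{i+1},\ldots,u_{j-1}]$, so by definition $u_j$ is ready for $\Gamma_{i-1}'$. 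The canonical selection rule would then have chosen $u_j$ over $u_i$ at step $i$, a contradiction.

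With the claim in hand, I would prove injectivity by reconstruction. Starting from $\Gamma_0' = \Gamma_0$ with its labeling, at each step $i \ge 1$ I would locate the minimum-labeled vertex $v \in V(\Gamma_{i-1}')$ with $\phi_\Gamma(v) \ne 0$; if none exists, stop. The support of $u_i$ is the edge containing the half-edge at $v$ with index $\phi_\Gamma(v)$, and the label of $u_i$ is $\psi_\Gamma$ evaluated at the edge-label of this support. Apply this split to $\Gamma_{i-1}'$ and propagate vertex and edge labels as in the canonical construction. By the claim, this procedure reproduces $D_R$ and hence $\Gamma$, so the map $\Gamma \mapsto (\phi_\Gamma, \psi_\Gamma)$ is injective. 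Together with the count $\#F_R = 4^{(2n-2+2R)+(3n-3+R)}$, this will yield $\lim_{R\to\infty}\frac{\log\#B_R(\Gamma_0)}{R} \le 3\log 4$, completing \thmref{Graph}.
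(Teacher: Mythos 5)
Your encoding map $\Gamma \mapsto (\phi_\Gamma,\psi_\Gamma)$ agrees with the paper's, but the decoding half rests on a claim that is both incorrectly proven and, in fact, false. In the proof of the claim you pass from ``$v$ survives $u_{i+1},\ldots,u_{j-1}$'' to ``the three edges incident to $v$ are preserved'' and then to ``the preimage of $\supp(u_j)$ survives $[u_{i+1},\ldots,u_{j-1}]$.'' This does not follow: by the paper's definition an edge survives a split only if \emph{both} of its endpoints do, while the survival of $v$ alone only preserves the half-edges at $v$ and their labels. The opposite endpoint $w$ of the edge in question may be destroyed by one of $u_{i+1},\ldots,u_{j-1}$, in which case that edge does not survive, $u_j$ is not ready for $\Gamma_{i-1}'$, and the canonical selection rule cannot choose $u_j$ at stage $i$. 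The claim itself fails: take a low-labeled original vertex $v$ joined by an edge to a higher-labeled vertex $w$, with $u_i$ supported on a different edge at $w$ and $u_{i+1}$ supported on the edge from $v$ to the vertex created by $u_i$. Then $v$ has the smallest label in $V(\Gamma_{i-1}')$ among eventually-destroyed vertices, yet $u_i$ destroys $w$, not $v$, because the split destroying $v$ is not yet ready. Your reconstruction rule (pick the minimum-labeled vertex with $\phi_\Gamma \ne 0$ and split there) would therefore produce the wrong split at stage $i$.

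This is precisely the issue the paper's decoding procedure is designed to avoid, and it is the piece missing from your argument. Instead of simply taking the minimum-labeled vertex with nonzero $\phi$, the paper iterates through vertices in label order and demands a \emph{match}: given $v$ with $\phi(v)\ne 0$, it locates the candidate supporting edge $e$ at $v$, and when $e$ is not a loop it additionally requires that $\phi$ at the other endpoint $w$ of $e$ equal the label of the half-edge of $e$ at $w$. One must then argue that only splits already ready for the current graph can produce a match, so that the first match encountered is exactly the one coming from the next canonical split. In the situation above, $v$ fails the match test (the $\phi$-value at its neighbor encodes a different half-edge), so the iteration correctly skips $v$ and finds $u_i$. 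Your proof needs this matching criterion and the accompanying ``only ready splits match'' argument to be correct.
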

  
  \begin{proof}
    
    By definition, any $\Gamma \in B_R(\Gamma_0)$ can be obtained from
    $\Gamma_0$ by a derivation $D$ of length $r \le R$. Let $D_r$ be the
    canonical derivation of $\Gamma$ coming from $D$. We now \emph{encode}
    $D_r$ by a pair of maps $\phi \from V \to \{0,1,2,3\}$ and $\psi
    \from E \to \{0,1,2,3\}$. Set $D_r$ \[ \Gamma_0 \splitt{u_1} \Gamma_1
    \splitt{u_2} \cdots \splitt{u_r} \Gamma_r. \]

    For $i \in V$, let $j$ be the largest index so that $\Gamma_j$ has a
    vertex $v$ with label $i$. If $j \ge R$ ($j > R$ means no such label
    exists), then set $\phi(i) =0$. If $j < R$, then $u_{j+1}$ must destroy
    $v$, so the support of $u_{j+1}$ is an edge $e$ in $\Gamma_j$ where $v$
    is either $\init(e)$ or $\term(e)$. Define $\phi(i) \in \{1,2,3\}$ to
    be the label of the half-edge of $e$ containing vertex $v$. If $e$ is a
    loop, then choose $\phi(i)$ to be the label of any half-edge of $e$.
    For $i \in E$, let $k$ be the largest index so that $\Gamma_k$ has an
    edge $e$ of label $i$. If $k\ge R$, then $\psi(i)=0$. If $k<R$, then
    $e$ is the support of $t_{k+1}$. In this case, define $\psi(i)\in
    \{0,\ldots,3\}$ to be the label of $t_{k+1}$. 
    
    To see this is an embedding, we will give a \emph{decoding} procedure
    that will recover from $(\phi,\psi)$ the canonical derivation
    $D_r=[u_1,\ldots,u_r]$ and hence $\Gamma$.  
    
    For each $k \ge 0$, suppose $\Gamma_k'$ has been constructed. In
    $\Gamma_k'$, let $i$ range from $1$ to $2n-2+2k$ in order and let $v$
    be the vertex in $\Gamma_k$ with label $i$. If $\phi(i) = 0$, then move
    on to $i+1$. Otherwise, $\phi(i)$ determines a unique edge $e$, where
    $v$ is either the initial or terminal vertex of $e$, such that the
    half-edge of $e$ at $v$ has label $\phi(i)$. We now explain a
    \emph{matching} procedure that can occur in two ways. If $e$ is a loop,
    then we have a match. If $e$ is not a loop, then let $w$ be the other
    vertex of $e$ with label $i'$. If $\phi(i')$ is exactly the label of
    the half-edge of $e$ at $w$, then we have a match. In all other case,
    there is no match and we move on to $i+1$. If there is a match, then
    let $j \in E$ be the label of $e$. The configuration $\psi(j)$
    determines a split supported on $e$ which we call $u_{k+1}'$, and
    applying $u_{k+1}'$ to $\Gamma_k'$ yields $\Gamma_{k+1}'$. Proceed this
    way until $k=r$ results in a derivation $D'$. 
    
    To see that $D'=D_r$. Let $e$ be the support of $u_k$. The encoding
    procedure ensures that the values of $\phi$ on the labels of $\init(e)$
    and $\term(e)$ determine $e$, and hence a match, and the value of
    $\psi$ on the label of $e$ agrees with $u_k$. Furthermore, since only
    the splits that are ready at $\Gamma_{k-1}$ can determine a match, and
    $u_k$ is the unique one among them that destroys the vertex of
    $\Gamma_{k-1}$ with the smallest label, the match coming from $u_k$
    will always be the first match the decoding procedure finds. This shows
    $\Gamma_k = \Gamma_k'$ and $t_k = t_k'$ for all $k$. Therefore,
    $B_R(\Gamma_0)$ embeds in $F_R$. \qedhere 

  \end{proof}
  
  Since $\#F_R = 4^{5n-5+3R}$, $\lim_{R \to \infty} \frac{ \log \#F_R}{R} =
  3 \log 4$. This completes the proof of \thmref{Graph}.

  \subsection{Triangulations of a surface}
  
  Let $\Sigma=\Sigma_{g,p}$ be an oriented surface of genus $g$ with $p$
  punctures. For any $n \ge p$, let $\T = \T(\Sigma)$ be the set of homotopy
  classes of triangulations of $\Sigma$ with $n$ vertices (the punctures of
  $\Sigma$ are always vertices of triangles.) A natural transformation of a
  triangulation is a \emph{diagonal flip}. Given $T \in \T$. Let $\Delta$
  and $\Delta'$ be two triangles in $T$ that share a common edge $E$. View
  $\Delta \cup \Delta'$ as a quadrilateral with diagonal $E$. Replace $E$
  by the other diagonal in the quadrilateral yields a triangulation $T' \in
  \T$. Call this process a (diagonal) flip about $E$ and denote it by $T
  \splitt{d} T'$. Let $B_R(T)$ be the set of all triangulations of $\Sigma$
  obtained from $T$ by a sequence of at most $R$ diagonal flips. 

  Fix $T_0 \in \T$. Dual to $T_0$ is a trivalent graph $\Gamma_0$ obtained
  by putting a vertex in the interior of each triangle and connecting two
  vertices by an edge when two triangles share an edge. Pick a vertex $x_0$
  in $\Gamma_0$ and let $G = \pi_1(\Sigma,x_0)$. We will label each edge of
  $\Gamma_0$ by an element of $G$ as follows. Orient the edges of
  $\Gamma_0$ arbitrarily. Pick a spanning tree $K_0$ in $\Gamma_0$ and
  label each edge of $K_0$ by $1$. Each edge $e$ in the complement of $K_0$
  represent an element of $G$: connect the end points of $e$ to $x_0$ along
  $K_0$ and orient the resulting closed curve so that it matches the
  orientation of $e$ in $\Gamma_0$. Now, label $e$ by the element that this
  closed curve represents in $G$. This makes a $G$--labeled graph $\Gamma_0
  \in \calG_m(G)$, where $m=2g+n-1$ is the rank of $\Gamma_0$. 
  
  By a pair $(\Gamma,f)$ we will mean a $G$--labeled graph $\Gamma \in
  \calG_m(G)$ together with an embedding $ f \from \Gamma \to \Sigma$. We
  say a pair $(\Gamma,f)$ is \emph{well-labeled} if for any closed path $p$
  in $\Gamma$, the product of labels of edges along $p$ is in the conjugacy
  class in $G$ represented by $f(p)$. By construction, $(\Gamma_0,i)$, where
  $\Gamma_0$ is the dual graph to $T_0$ and $i$ is the inclusion map, is
  well-labeled.
  
  \begin{proposition} \label{Prop:T-to-G}
    
    There exists an embedding of $B_R(T_0)$ into $B_R(\Gamma_0)$, hence
    $\#B_R(T_0) \le \#B_R(\Gamma_0)$.

  \end{proposition}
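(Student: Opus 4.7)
The plan is to define a map $\Phi \colon B_R(T_0) \to B_R(\Gamma_0)$ sending a triangulation $T$ to its dual $G$--labeled graph $\Gamma_T$, and to show $\Phi$ is an injection. The key observation is that a diagonal flip on $T$ corresponds naturally to a double split on the dual trivalent graph, and when the labels are chosen so that $(\Gamma_T, f_T)$ is well-labeled (with $f_T$ the inclusion of the dual graph into $\Sigma$), the local label updates forced by the flip match exactly the rules defining a double split in \secref{Graph}.

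I would carry this out as follows. Given a flip sequence $T_0 \splitt{d} T_1 \splitt{d} \cdots \splitt{d} T_r = T$ of length $r \le R$, let $\Gamma_i$ be the dual trivalent graph of $T_i$ with the labeling propagated from $\Gamma_0$: at each stage, the edge dual to the flipped edge plays the role of the support $e$ of a double split, and the four surrounding edges update by the formulas $g_a \to g_a g_e$, $g_a \to \overline{g_e} g_a$, and so on. A direct check in a neighborhood of the flipped quadrilateral shows these are precisely the updates needed to maintain the well-labeled condition: a closed path in $\Gamma_{i+1}$ traversing a modified edge now picks up one extra crossing of $e$ (or its reverse) in $\Sigma$, which in $\pi_1(\Sigma)$ contributes exactly the factor $g_e$ or $\overline{g_e}$ appearing in the split rule. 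Iterating gives $\Phi(T) = \Gamma_r \in B_R(\Gamma_0)$, and well-definedness (independence of the particular flip sequence chosen to reach $T$) follows because, with the spanning tree, edge orientations, and basepoint fixed, the well-labeled condition uniquely determines the $G$--labels from the homotopy class of $T$.

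For injectivity, suppose $T, T' \in B_R(T_0)$ have $\Phi(T)$ equivalent to $\Phi(T')$ in $\calG_m(G)$. The dual construction is a bijection between homotopy classes of triangulations of $\Sigma$ with $n$ vertices and isotopy classes of trivalent graphs embedded in $\Sigma$ whose complementary regions are disks, so it suffices to recover the isotopy class of the embedding $f_T$ from the $G$--labeled graph $\Gamma_T$ alone. The well-labeled property encodes exactly the data of the induced map on fundamental groupoids, and an embedding of a graph into $\Sigma$ with fixed combinatorics of complementary regions is determined up to isotopy by this data. Hence an equivalence $\Gamma_T \cong \Gamma_{T'}$ of $G$--labeled graphs upgrades to an isotopy $f_T \simeq f_{T'}$, which forces $T = T'$ as homotopy classes of triangulations.

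The main obstacle is this rigidity step in the injectivity argument. The equivalence relation on $\calG_m(G)$ remembers only the abstract $G$--labeled graph, with no ambient surface data, so one needs to combine the well-labeled condition with a uniqueness theorem for embeddings of graphs into surfaces to promote an abstract equivalence of labeled graphs to an isotopy of dual triangulations; once this rigidity is in hand, the remaining steps reduce to the local flip-to-split bookkeeping of the second paragraph.
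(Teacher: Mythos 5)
Your overall outline matches the paper's: pass to the dual $G$--labeled graph, show a diagonal flip corresponds to a double split that preserves the well-labeled condition, and prove injectivity using that condition. But you flag the injectivity step as ``the main obstacle'' and leave it resting on an unproved rigidity claim (``an embedding of a graph into $\Sigma$ with fixed combinatorics of complementary regions is determined up to isotopy by the fundamental groupoid data''). That is precisely the content that needs a proof, and the paper supplies a concrete one rather than appealing to such a theorem. Given an equivalence of $G$--labeled graphs $\phi\from\Gamma\to\Gamma'$, the paper uses the dual structure to promote $\phi$ to a homeomorphism $f\from\Sigma\to\Sigma$ sending triangles of $T$ to triangles of $T'$. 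Then for any closed curve $q$ in $\Sigma$, one pushes $q$ to a closed edge-path $p$ in $\Gamma$; the well-labeled condition says the product of labels along $p$ represents the free homotopy class of $q$, and since $\phi$ preserves labels, $f(q)\simeq p'=\phi(p)$ carries the same label product, hence lies in the same conjugacy class. So $f$ fixes the free homotopy class of every closed curve and is therefore homotopic to the identity, giving $T=T'$ in $\T(\Sigma)$. Your proposal would be complete once this argument (or an equivalent one) is written in; as it stands the key step is named but not discharged.

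One smaller inaccuracy: your well-definedness reasoning claims the well-labeled condition uniquely determines the $G$--labels from the homotopy class of $T$. It does not---well-labeling only constrains label products around closed paths, so the edge labels are determined only up to gauge transformations at vertices, and after a split sequence the labels need not be in any normalized form (e.g.\ tree edges labeled $1$). The paper sidesteps this entirely: it simply picks an arbitrary flip sequence for each $T$ to produce some $\Gamma_T$, and the injectivity argument alone already forces $T\mapsto\Gamma_T$ to be injective and hence gives $\#B_R(T_0)\le\#B_R(\Gamma_0)$. No independent well-definedness lemma is required.
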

  
  \begin{proof}

    Assume $T$ and a well-labeled dual graph $(\Gamma,i)$ are given.
    Consider a flip $T \splitt{d} T'$ about an edge $E$ in $T$ and let $e$
    be the edge in $\Gamma$ dual to $E$. Identify the quadrilateral
    containing $E$ and a contractible neighborhood of $e$ dual to the
    quadrilateral with the left-hand side of \figref{Flip}. We define a
    split move $\Gamma \splitt{s} \Gamma'$ supported on $e$, where
    $(\Gamma',i)$ is also embedded in $\Sigma$, as indicated by
    \figref{Flip}. We refer to $s$ as the split associated to the flip $d$.
    Note that a closed path $p$ in $\Gamma$ can naturally be mapped to a
    homotopic closed path $p'$ in $\Gamma'$ and the products of labels
    along edges of $p$ and $p'$ are the same. That is, the pair
    $(\Gamma',i)$ is still well-labeled.      
    
    \begin{figure}[htp!]
    \begin{center}
      \includegraphics{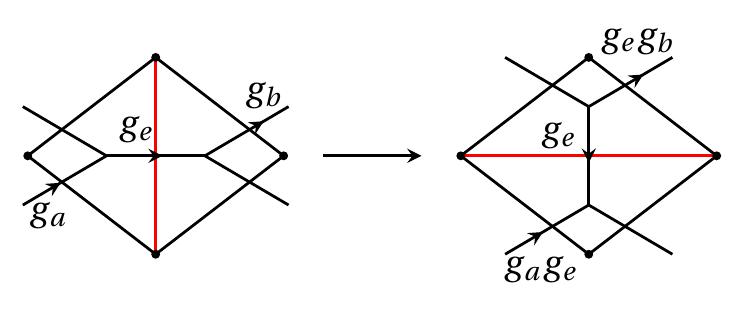}
    \end{center}
    \caption{Flip and dual split.}
    \label{Fig:Flip}
    \end{figure}
    
    We now define a map from $B_R(T_0)$ to $B_R(\Gamma_0)$. For any $T \in
    B_R(T_0)$, choose an arbitrary sequence of flips $T_0 \splitt{d_1} T_1
    \splitt{d_2} \cdots \splitt{d_R} T_R=T$ and let $\Gamma_0 \splitt{s_1}
    \Gamma_1 \splitt{s_2} \cdots \splitt{s_R} \Gamma_R$ be the associated
    sequence of \emph{dual splits} as constructed above. The map from
    $B_R(T_0)$ to $B_R(\Gamma_0)$ is defined by sending $T_R$ to
    $(\Gamma_R,i)$ and then to $\Gamma_R$. 
 
    We show that this map is injective. In fact, for triangulations $T$ and
    $T'$ and dual labeled graphs $(\Gamma,i)$ and $(\Gamma',i)$ that are
    well-labeled, we show that if $\Gamma$ and $\Gamma'$ are equivalent
    $G$--labeled graphs, then there exists a homeomorphism of $\Sigma$
    homotopic to the identity taking $T$ to $T'$.

    Since $\Gamma$ and $\Gamma'$ are equivalent, there is a graph
    isomorphism $\phi \from \Gamma \to \Gamma'$ such that the label of any
    edge $e \in \Gamma$ matches the label of $\phi(e) \in \Gamma'$. Since
    $\Gamma$ and $\Gamma'$ are dual graphs to the triangulations $T$ and
    $T'$ respectively, we can build a homeomorphism $f \from \Sigma \to
    \Sigma$ mapping a triangle of $T$ associated to a vertex $v \in \Gamma$
    to the triangle of $T'$ associated to the vertex $\phi(v)$. To show
    that $\phi$ is homotopic to identity, it is sufficient to show that
    every closed path $q$ in $\Sigma$ is homotopic to $f(q)$.

    First perturb $q$ so it missed the vertices of $T$. Then $q$ can be
    pushed to a closed path $p$ in $\Gamma$. Since $q$ is homotopic to $p$,
    we have $f(q)$ is homotopic to $p' = f(p)$. But the product of labels
    along the closed paths $p$ and $p'$ are identical, which means $p$ and
    $p'$ represent the same conjugacy class in $G$ and hence are homotopic.
    This finishes the proof.  \qedhere
   
  \end{proof}

  \thmref{IntroTriangle} from the introduction now follows from \thmref{Graph} 
  and \propref{T-to-G}. 

 
  \bibliographystyle{alpha} 
  \bibliography{main}

  \end{document}